\numberwithin{equation}{section}
\setlist[enumerate,1]{label={\rm(\roman*)}, ref={\rm\roman*}} 
\setlist[enumerate,2]{label={\rm(\alph*)}, ref={\rm\alph*}} 
\theoremstyle{plain}
\newtheorem{theorem}{Theorem}
\newtheorem{proposition}[theorem]{Proposition}
\newtheorem{lemma}[theorem]{Lemma}
\newtheorem{proposition.definition}[theorem]{Proposition/Definition}
\newtheorem*{claim}{Claim}
\newtheorem{theoremalpha}{Theorem}
\newtheorem{propositionalpha}[theoremalpha]{Proposition}
\newtheorem*{lemmann}{Lemma}
\theoremstyle{definition}
\theoremstyle{remark}
\newtheorem{remark}[theorem]{Remark}
\newtheorem*{question}{Question}
\newcounter{defcounter}
\newcommand{\lra}{\longrightarrow}
\newcommand{\PP}{\mathbf{P}}
\newcommand{\ZZ}{\mathbf{Z}}
\newcommand{\QQ}{\mathbf{Q}}
\newcommand{\pr}{\prime}
\newcommand{\dra}{\dashrightarrow}
\newcommand{\Linser}[1]{| \mspace{1.5mu} {#1}
\mspace{1.5mu} |}
\newcommand{\linser}[1]{\Linser{  {#1}  }}
\DeclareMathOperator{\autocorr}{autocorr}
\DeclareMathOperator{\gon}{gon}
\DeclareMathOperator{\pro}{pr}
\newcommand{\ol}[1]{\overline{#1}}
\DeclareMathOperator{\Pic}{Pic}
\DeclareMathOperator{\irr}{irr}
\DeclareMathOperator{\Trace}{Tr}
\DeclareMathOperator{\End}{End}
\DeclareMathOperator{\Id}{Id}
\DeclareMathOperator{\Hom}{Hom}
\DeclareMathOperator{\defi}{def}
\DeclareMathOperator{\Hdg}{Hdg}
\DeclareMathOperator{\GL}{GL}
\DeclareMathOperator{\HS}{HS}
\def\shortbar{%
\smash{\scalebox{0.4}[1.0]{$-$}}}
\newcommand{\xdashrightarrow}[2][]{\ext@arrow 0359\rightarrowfill@@{#1}{#2}}
\def\rightarrowfill@@{\arrowfill@@\relax\shortbar\dashrightarrow}
\def\arrowfill@@#1#2#3#4{%
  $\m@th\thickmuskip0mu\medmuskip\thickmuskip\thinmuskip\thickmuskip
   \relax#4#1
   \xleaders\hbox{$#4#2$}\hfill
   #3$%
}
\newcommand{\longdashrightarrow}{\xdashrightarrow{\hspace{9pt}}}
\title{Measures of association between algebraic varieties, II: Self-correspondences}
\author{Robert Lazarsfeld}
\address{Department of Mathematics, Stony Brook University, Stony Brook, New York 11794, USA}
\email{robert.lazarsfeld@stonybrook.edu}
\author{Olivier Martin}
\address{Department of Mathematics, Stony Brook University, Stony Brook, New York 11794, USA\\
  \indent Instituto de Matemática Pura e Aplicada, Rio de Janeiro, RJ 22460-320, Brazil}
\email{olivier.martin@stonybrook.edu}
\begin{document}


\maketitle

\begin{prelims}



\DisplayKeyWords

\medskip

\DisplayMSCclass

\end{prelims}


\newpage

\setcounter{tocdepth}{1}

\tableofcontents


\section{Introduction}\label{intro}

In our previous paper \cite{MOA}, we introduced and studied some invariants intended to measure how far from birationally isomorphic two given varieties $X$ and $Y$ of the same dimension might be. These were defined by studying the minimal birational complexity of correspondences between $X$ and $Y$. Following  a suggestion of Jordan Ellenberg, the present note continues this line of thought by investigating self-correspondences of a given variety.

Let $X$ be a smooth complex projective variety of dimension $n$. By an auto-correspondence of $X$, we understand a smooth projective variety $Z$ of dimension $n$ sitting in a diagram
\begin{equation}
  \begin{gathered} \label{Diagram1.Intro}
\xymatrix{
& Z \ar[dl]_a\ar[dr]^b  \\
X & & X
 }
\end{gathered}
\end{equation}
with $a$ and $b$ dominant and hence generically finite. 
We assume that $Z$ maps birationally to its image in $X \times X$ (so that general fibers of $a$ and $b$ are identified with subsets of $X$). The \textit{auto-correspondence degree} of $X$ is defined to be
\begin{equation} \label{Def.Autocorrdeg.Intro}
\autocorr(X) \ = \ \min_{Z \ne \Delta} \,\left\{ \deg(a) \cdot \deg(b)   \right\},
\end{equation}
the minimum being taken over all such $Z$ excluding those that map to the diagonal. Thus
$\autocorr(X) = 1$ if and only if $X$ admits non-trivial birational automorphisms. By considering the fiber square of a rational covering $X \dra \PP^n$, one sees that  
\[ \autocorr(X) \ \le \ \left( \irr(X) - 1 \right)^2, \]
where the degree of irrationality $\irr(X)$ is defined to be the least degree of such a covering (see Section~\ref{sec1},  below). Our intuition is that equality holding means that $X$ is ``as far as possible" from having any interesting self-correspondences of low degree. 

Our main results are as follows.

\begin{propositionalpha} \label{NewPropositionA}
If\, $X$ is a very general curve of genus $g \ge 3$, then
\[ \autocorr(X) \ = \ \left( \gon(X) -1 \right)^2, \]
and minimal correspondences arise from the fiber square of a gonal map.
\end{propositionalpha}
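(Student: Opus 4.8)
The plan is to establish two inequalities. The easy direction is $\autocorr(X) \le (\gon(X)-1)^2$: taking a gonal pencil $\phi\colon X \to \PP^1$ of degree $d = \gon(X)$ and forming the fiber product $X \times_{\PP^1} X$, the complement of the diagonal is an auto-correspondence with both projections of degree $d-1$ (after resolving singularities; the generic fiber of each projection is the $d-1$ residual points of a fiber of $\phi$), giving a correspondence of degree $(d-1)^2$. Note one should check that no component of $X\times_{\PP^1}X$ other than $\Delta$ maps to the diagonal, which holds because a general fiber of $\phi$ has $d$ distinct points; if $d=2$ the correspondence is the hyperelliptic involution and the statement reads $\autocorr(X)=1$, recovering the classical fact that a hyperelliptic curve has a nontrivial automorphism, so we may assume $g\ge 3$ nonhyperelliptic where $\gon(X)\ge 3$.

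The substance is the lower bound: for $X$ very general of genus $g\ge 3$, any auto-correspondence $Z\ne\Delta$ has $\deg(a)\cdot\deg(b) \ge (\gon(X)-1)^2$. The first step is to normalize: replace $Z$ by the normalization of its image in $X\times X$, a (possibly singular) curve $C\subset X\times X$ with projections $a,b$; one may assume $C$ is not the diagonal and, by symmetry and by discarding the diagonal component if present, that $C$ is irreducible and distinct from $\Delta$. Consider the \emph{correspondence with valence} or simply the induced multivalued map $x \mapsto b(a^{-1}(x))$, an effective divisor of degree $\deg(a)$ on $X$ varying in a family over $X$. The key classical input is that a very general curve of genus $g\ge 3$ has no nontrivial correspondences of low bidegree — more precisely, I would invoke (or reprove via a degeneration / monodromy argument in the style of the theory behind the gonality of the general curve) that for very general $X$, any correspondence $C\subset X\times X$ of bidegree $(d_1,d_2)$ with $C\ne\Delta$ and, say, $d_1 \le d_2$ must have $d_1 \ge \gon(X)-1$. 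Granting this, $\deg(a) = d_1 \ge \gon(X)-1$ and $\deg(b)=d_2\ge d_1 \ge \gon(X)-1$, hence $\deg(a)\deg(b)\ge(\gon(X)-1)^2$, and one reads off that equality forces $d_1=d_2=\gon(X)-1$ with the correspondence being exactly the residuation in a $g^1_d$, i.e. the fiber square of a gonal map.

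The main obstacle is exactly this Brill–Noether-type rigidity statement: ruling out a correspondence of bidegree $(d-1-k, m)$ for small $k$ and arbitrary $m$. The strategy I would follow is a specialization argument. Degenerate $X$ to a nodal or chain-of-elliptic-curves stable curve $X_0$ (or to the setting used to prove that $\gon(X)=\lfloor (g+3)/2\rfloor$ for general $X$), and show by a limit-linear-series or admissible-covers analysis that a family of effective divisors of degree $d-1-k$ on $X$ moving in a pencil — which is what $a^{-1}$ followed by $b$ provides, since the fibers of $a$ sweep out $X$ — would specialize to a $g^1_{d-1-k}$ on $X_0$, contradicting the gonality bound unless the divisor class is the residual of the gonal pencil; one must also handle the possibility that $b$ contracts nothing and $d_2$ is forced up by the geometry. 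An alternative, perhaps cleaner, route is to bound $\deg(a)\deg(b)$ below directly: the correspondence $C$ defines a self-map on $\mathrm{Sym}$'s / on the Jacobian, and for very general $X$ the only endomorphisms of $JX$ are multiplication by integers, which constrains the possible bidegrees; combined with the fact that $C\ne\Delta$, this should push one of the projection degrees up to at least $\gon(X)$ unless $C$ comes from a pencil. I would expect the write-up to spend most of its length making one of these two specialization/Jacobian arguments precise, including the identification of the equality case.
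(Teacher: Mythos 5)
Your reduction to the lower bound and your treatment of the upper bound are fine, but the heart of the proposition --- that any $Z \ne \Delta$ has $\deg(a), \deg(b) \ge \gon(X)-1$, and that equality is achieved only by residuation in a gonal pencil --- is precisely the statement you propose to ``invoke,'' and it is not an off-the-shelf result; invoking it is circular. The paper proves it as follows. For very general $X$ one has $\Pic(X\times X) = a^*\Pic(X)\oplus b^*\Pic(X)\oplus \ZZ\cdot\Delta$ (equivalently $\End(J(X))=\ZZ$, the mechanism you mention only in passing as an ``alternative route''), so the image of $Z$ is cut out by a section of $(B\boxtimes A)(-m\Delta)$ and the induced maps $Z_*^{1,0}$, ${Z^*}^{1,0}$ on $H^{1,0}(X)$ are multiplication by $-m$. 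The trace identity $\omega(x_1)+\cdots+\omega(x_\delta) = -m\,\omega(y)$ for $b^{-1}(y)=x_1+\cdots+x_\delta$ then shows that the $\delta+1$ points $y, x_1,\ldots,x_\delta$ fail to impose independent conditions on holomorphic $1$-forms, hence move in a pencil, giving $\deg(b)+1\ge\gon(X)$ when $m\ne 0$ (when $m=0$ one gets the stronger bound $\deg(b)\ge\gon(X)$ from the traceless case). Neither this step nor any working substitute for it appears in your write-up.

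The equality case is likewise not ``read off.'' Even granting the numerical bound, one must still classify the correspondences with $\deg(a)=\deg(b)=\gon(X)-1$: the paper rules out $m<0$ by intersecting with the diagonal (with a separate argument for $g=3$), and for $m>0$ uses $a^{-1}(x)\in|A(-m\cdot x)|$ to get $r(A)\ge m$, then Brill--Noether generality ($\rho(m,d,g)\ge 0$) to show $\deg(a)=\deg(A)-m\ge \frac{m}{m+1}g$, which forces $m=1$; finally $|(B\boxtimes A)(-\Delta)|$ is empty unless $A=B$, which identifies $Z$ with the residual of the diagonal in the fiber square of the pencil $|A|$. Your proposed degeneration/limit-linear-series route might conceivably be made to work, but as written it is a plan rather than a proof, and it does not engage with this classification step at all.
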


\begin{theoremalpha} \label{NewTheoremB}
Let $X \subseteq \PP^{n+1}$ be a very general hypersurface of degree $d \ge 2n + 2$. Then
\[ 
\autocorr(X) \ = \ (d-2)^2 \ = \ \left( \irr(X) -1\right)^2,
\]
and again minimal correspondences are birational to the fiber square of projection from a point.
\end{theoremalpha}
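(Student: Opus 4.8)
The plan is to establish the lower bound $\autocorr(X) \ge (d-2)^2$ for a very general hypersurface $X \subseteq \PP^{n+1}$ of degree $d \ge 2n+2$; the upper bound follows from the general inequality $\autocorr(X) \le (\irr(X)-1)^2$ together with the known computation $\irr(X) = d-1$ for such $X$ (due to Bastianelli–De Poi–Ein–Lazarsfeld–Ullery), projection from a point exhibiting a covering of degree $d-1$ whose fiber square has the stated degrees. So suppose
\[
\xymatrix{ & Z \ar[dl]_a \ar[dr]^b \\ X & & X}
\]
is a non-diagonal auto-correspondence with $\deg(a)\cdot\deg(b) = \autocorr(X)$, and write $p=\deg a$, $q=\deg b$. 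The heart of the matter is to bound $p$ and $q$ from below by $d-1$.

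First I would reduce to a statement about the canonical bundle. Since $X$ is a hypersurface of degree $d$ in $\PP^{n+1}$, we have $K_X = \OO_X(d-n-2)$, which is very ample (using $d\ge 2n+2 > n+2$) and in particular base-point free and of large degree. The key mechanism — exactly as in the curve case and in \cite{MOA} — is that for a dominant generically finite map $f\colon Z \to X$ of degree $e$ between smooth projective varieties, pullback of canonical (or pluricanonical) forms is injective, and more importantly a general fiber $f^{-1}(x) = \{z_1,\dots,z_e\}$ imposes constraints: the trace map forces the points $a(z_i)$ (resp. $b(z_i)$) to lie in special position with respect to $|K_X|$. Concretely, I would run the "Cayley–Bacharach"/positivity argument of Bastianelli–De Poi–Ein–Lazarsfeld–Ullery: if $p \le d-2$, then a general fiber $\{x_1,\dots,x_p\}$ of $a$ — which is the same as a general length-$p$ "fiber subscheme" of the map $b\circ a^{-1}$ viewed as a correspondence on $X$ — would give a positive-dimensional family of length-$p$ subschemes of $X$ satisfying the Cayley–Bacharach condition with respect to $K_X = \OO_X(d-n-2)$; the very generality of $X$ then forces this family to come from lines meeting $X$, hence (since $p\le d-2 < d-1$) to be trivial, i.e. $a$ or $b$ factors through a linear projection and $Z$ is birational to the fiber square of projection from a point.

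Thus the real content, and the step I expect to be the main obstacle, is the rigidity/classification input: showing that the only way to have $\min(p,q) \le d-2$ is the "trivial" one coming from a common linear projection, and then checking that such a $Z$ either maps to the diagonal (excluded) or has $\deg(a)\cdot\deg(b) \ge (d-2)^2$. This requires a careful analysis of auto-correspondences whose two legs factor through projections from (possibly different) points $o_1, o_2 \in \PP^{n+1}$: one must show that if $o_1 \ne o_2$ the resulting correspondence on $X$ has degree bounded below appropriately, and if $o_1 = o_2$ the correspondence is supported on the diagonal (or on graphs of the deck transformations of $\mathrm{pr}_{o_1}$, of which a very general hypersurface has none beyond the identity when $d \ge 2n+2$). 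I would handle the "different points" case by intersecting $X$ with the line $\overline{o_1 o_2}$ and with $2$-planes through it, reducing to a statement about correspondences between the fibers of the two projections; the degree bookkeeping there should yield $pq \ge (d-2)^2$, with equality only for the fiber square of a single projection. Finally, I would verify that the fiber square of $\mathrm{pr}_o\colon X \dra \PP^n$ does not map into the diagonal — its image is $\{(x,y) : x,y,o \text{ collinear}\}$, which strictly contains $\Delta_X$ — so it is a legitimate competitor, giving $\autocorr(X) = (d-2)^2$.
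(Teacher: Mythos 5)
Your overall architecture (upper bound from $\irr(X)=d-1$ and the residual of the fiber square of projection from a point; lower bound via a trace/Cayley--Bacharach argument on fibers) matches the paper's, but there are two genuine gaps in the lower bound, the second of which is the actual heart of the proof. First, for a \emph{self}-correspondence the trace map need not vanish, so a general fiber of $a$ or $b$ need not satisfy Cayley--Bacharach at all. The paper first proves that $\End(H^n_{\pro}(X,\ZZ))=\ZZ\cdot\Id$ for very general $X$ (a monodromy statement), so that $Z_*^{n,0}$ is multiplication by an integer $c$, and then applies Cayley--Bacharach to $Z-c\Delta$; this is why the fibers \emph{together with the base point} satisfy the condition, and why the whole argument splits into the cases $c=0$ and $c\neq 0$. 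Your proposal asserts the special position of the fibers without this input.

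Second, and more seriously, the Cayley--Bacharach theorems of Bastianelli--Cortini--De Poi only give $\deg(a),\deg(b)\ \ge\ d-n-1$ (or $d-n$), plus the statement that a general fiber spans a line $\ell_y$. Your claim that very generality then ``forces this family to come from lines meeting $X$, hence \dots\ to be trivial'' is exactly the step that needs proof, and it does not follow from the Cayley--Bacharach machinery. The paper closes this gap by forming the correspondence $A=\{(x,y)\mid x\in\ell_y\}$ via the incidence variety in $X\times\mathbf{G}(1,n+1)$, decomposing $A=\ol{Z}+m\Delta+R$, showing $R$ cannot dominate the first factor (by the degree bounds already established), computing $m=-c$ from the fact that $A$ acts through the Hodge--Tate cohomology of the Grassmannian, and finally invoking the positivity theorems of Ein and Voisin for subvarieties of very general hypersurfaces to prove that every component of $R$ is of the form $\{x_0\}\times X$; only then does the count $d=\delta_b+m+\deg(R)$ yield $\delta_b\ge d-2$, and only then does one know that the minimal $Z$ is residual to the fiber square of projection from the point $x_0$. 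Your alternative plan for the classification (intersecting with the line $\overline{o_1o_2}$ and $2$-planes through it) presupposes that both legs already factor through linear projections, which is precisely what has not been established.
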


In fact, we classify all self-correspondences in a slightly wider numerical range: see Theorem~\ref{NewTheoremD'}.

If $X$ is a hyperelliptic curve of genus $g$, then $\autocorr(X) = 1$ since $X$ has a non-trivial automorphism whose graph is a non-diagonal copy of $X$ sitting in $X \times X$.   David Rhyd asked whether there are any unexpected hyperelliptic curves in this product. Our final result asserts that there are not.

\begin{theoremalpha}   \label{NewTheoremC}
Let $X$ be a very general hyperelliptic curve of genus $g\geq 2$. The only hyperelliptic curves in $X\times X$ are
\begin{itemize}
\item the fibers of the projection maps, 
\item the diagonal, and
\item the graph of the hyperelliptic involution.
\end{itemize}
In particular, the image of any hyperelliptic curve in $X\times X$ under the Abel--Jacobi map is geometrically degenerate in $J(X)\times J(X)$; \textit{i.e.}, it generates a proper subtorus of that product.
\end{theoremalpha}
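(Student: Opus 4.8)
The plan is to reduce the classification of hyperelliptic curves $C \subseteq X \times X$ to a statement about auto-correspondences, and then invoke the fact that a very general hyperelliptic curve of genus $g$ has no interesting low-degree self-correspondences beyond the obvious ones. Let $C \subseteq X \times X$ be an irreducible hyperelliptic curve, with $a, b \colon C \to X$ the two projections. If either $a$ or $b$ is constant, then $C$ is a fiber of a projection; so assume both are dominant, hence finite (since $X$ is a curve). Passing to the normalization, we get an auto-correspondence of $X$ in the sense of the introduction. The key numerical input is that $C$ is hyperelliptic, so $\gon(C) = 2$; since $a \colon C \to X$ is finite of some degree $\deg(a)$ and composing with the degree-$2$ map $C \to \PP^1$ exhibits $X$ as covered by a... no — rather, composing the gonal pencil with nothing helps directly. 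Instead, the pullback under $a$ of the $g^1_{\gon(X)}$ on $X$ is a pencil on $C$ of degree $\gon(X) \cdot \deg(a)$, and any pencil on the hyperelliptic curve $C$ of degree $< g+... $ — the precise bound is that the gonal pencil is the unique $g^1_2$ and every base-point-free pencil of small degree factors through it. So I would use that $a^*\!\left(g^1_{\gon(X)}\right)$ either is composed with the $g^1_2$ of $C$ — forcing $\deg(a)$ to be even and the hyperelliptic cover to descend — or gives an independent pencil, which by Castelnuovo-type bounds on hyperelliptic curves is impossible once $\deg(a)$ is small. For a very general $X$, $\gon(X) = 2$ as well, so the arithmetic is especially rigid.

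More concretely: here $X$ itself is a very general hyperelliptic curve, so $\gon(X) = 2$, and $\irr(X) = 2$. The first step is to bound $\deg(a) \cdot \deg(b)$. Compose $a \colon C \to X$ with the hyperelliptic map $h \colon X \to \PP^1$: we get $h \circ a \colon C \to \PP^1$, a pencil of degree $2\deg(a)$ on the hyperelliptic curve $C$. Now a hyperelliptic curve of genus $g_C$ carries no base-point-free pencil of degree $< g_C + 1$ that is not composed with its $g^1_2$ (this is classical — it follows from the fact that $C \to \PP^1$ of degree $2$ is the canonical map composed with the rational normal curve, so $C$ lies on a surface scroll and any low-degree pencil comes from the ruling). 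Hence either $2\deg(a) \geq g_C + 1$, in which case we get a lower bound on $g_C$ in terms of $\deg(a)$ that we will contradict, or $h \circ a$ factors as $C \xrightarrow{2:1} \PP^1 \xrightarrow{\deg(a)} \PP^1$ through the hyperelliptic map of $C$. The symmetric statement holds for $b$. The second step is the genus/Hurwitz bookkeeping: once both $h\circ a$ and $h\circ b$ factor through the $g^1_2$ of $C$, the hyperelliptic involutions are compatible, the map $C \to X \times X$ descends to a map of the quotients $\PP^1 \to \PP^1 \times \PP^1$, i.e. a correspondence on $\PP^1$, and $C$ is recovered as a fiber product — whence $C$ is, up to the $\PP^1 \times \PP^1$ bookkeeping, the fiber square of $h$ with itself, which after base change gives exactly the diagonal and the graph of the hyperelliptic involution (the two components of $X \times_{\PP^1} X$).

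The main obstacle will be the case analysis when $a$ (or $b$) has moderate degree: showing that $h \circ a$ really must be composed with the $g^1_2$ of $C$ rather than defining a genuinely new pencil. This is where "very general" must enter — for a generic hyperelliptic $X$ one must rule out the existence of a curve $C \subseteq X \times X$ whose projections $a, b$ have degrees large enough to evade the clean factorization argument but for which $C$ is still forced to be hyperelliptic. I expect to handle this either by a Brill–Noether / Castelnuovo count bounding the gonality of $C$ from below in terms of its genus (which in turn is bounded below via Hurwitz from $\deg(a)$ and the ramification of $a$), combined with a monodromy or specialization argument showing that for very general $X$ no such ramification configuration occurs, or — more in the spirit of this paper — by directly applying the classification of low-degree self-correspondences underlying Proposition~\ref{NewPropositionA} and Theorem~\ref{NewTheoremD'} to conclude that the only self-correspondences of a very general hyperelliptic $X$ below the relevant degree bound are the fiber square of the hyperelliptic map and the graph of the involution, then checking by hand which of the resulting curves in $X \times X$ are hyperelliptic. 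The final assertion about geometric degeneracy in $J(X) \times J(X)$ is then immediate: each of the listed curves maps into a translate of a proper abelian subvariety — a fiber maps to $\{pt\} \times J(X)$ (after translation) or its image generates only one factor, the diagonal maps into the anti-diagonal's complement... more precisely the diagonal and the graph of the involution map to $1$-dimensional (hence for $g \geq 2$ proper) subtori since their Jacobians have dimension $g_C$ but the relevant Abel–Jacobi image sits inside the image of $J(C) \to J(X) \times J(X)$, which factors through the sub-Jacobian cut out by the involutions being compatible.
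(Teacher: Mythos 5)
There is a genuine gap, and it sits exactly where you flag ``the main obstacle.'' Your first reduction is fine for $g\ge 3$: since $C$ is hyperelliptic, the base-point-free pencil $h\circ a$ of degree $2\deg(a)$ either is non-special --- which forces $2\deg(a)\ge g_C+1$, contradicting the Riemann--Hurwitz bound $g_C\ge \deg(a)(g-1)+1$ once $g\ge 3$ --- or factors through the $g^1_2$ of $C$, whence $a$ (and likewise $b$) intertwines the hyperelliptic involutions and $(a,b)$ descends to a map $(\alpha,\beta)\colon\PP^1\to\PP^1\times\PP^1$. (For $g=2$ the Hurwitz contradiction fails and you have no argument; the paper excludes $g=2$ from Proposition~\ref{PropositionE} and cites Schoen for that case.) But at this point you have only shown that $C$ lies over some rational curve $D\subseteq\PP^1\times\PP^1$ of bidegree $(\deg b,\deg a)$; you have not used ``very general'' at all, and indeed your argument so far is consistent with $C$ being the graph of an extra automorphism, or a hyperelliptic component of $(h\times h)^{-1}(D)$ for $D$ the graph of an arbitrary rational map $\beta\colon\PP^1\to\PP^1$ of arbitrary degree. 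Showing that no such component is hyperelliptic for very general $X$ \emph{is} the theorem, and none of your proposed fallbacks can deliver it: Proposition~\ref{NewPropositionA} concerns very general curves in $\mathfrak{M}_g$ (which are not hyperelliptic for $g\ge 3$), Theorem~\ref{NewTheoremD'} concerns hypersurfaces, and in any case those results only control correspondences of \emph{bounded} degree, whereas here $\deg(a),\deg(b)$ are a priori unbounded, so no Cayley--Bacharach-type lower bound on degrees can close the argument.

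The paper's proof runs in the opposite logical direction from yours. It first establishes the ``in particular'' clause as Proposition~\ref{PropositionE} --- every hyperelliptic curve in $X\times X$ has geometrically degenerate Abel--Jacobi image --- by a specialization argument: one degenerates within the hyperelliptic locus to loci where $J(X)$ is isogenous to $\mathcal{A}\times E$ with $E$ a fixed elliptic curve, uses the Colombo--Pirola density results to show the projected image in $E\times E$ would have to move, and contradicts Pirola's theorem that hyperelliptic curves in abelian varieties are rigid up to translation. Only then is the classification deduced, using $\End(J(X))=\ZZ$ to see that the relevant abelian subvariety is the image of $x\mapsto(mx,nx)$ and the degree-$2$ Gauss map of $X\subset J(X)$ to force $n=\pm m$, yielding the diagonal and the graph of the involution. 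Your proposal treats the degeneracy statement as the easy corollary of the classification (which it is, once the classification is known), but supplies no mechanism for the classification itself; that mechanism is the transcendental rigidity input, which is absent from your outline.
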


By a hyperelliptic curve in $X \times X$, we mean an irreducible curve $Z \subseteq X \times X$ whose normalization is hyperelliptic.

As pointed out by the referee, Theorem~\ref{NewTheoremC} is closely related to work of Schoen from \cite{Schoen}. In particular, \cite[Proposition 4.1(2)]{Schoen} implies Theorem~\ref{NewTheoremC} for genus $2$ curves. Schoen also gives examples of hyperelliptic curves $X$ such that $X\times X$ contains finitely many hyperelliptic curves (see \cite[Lemma 1.5 and Proposition 2.2]{Schoen}).

A consequence of the proof of Theorem~\ref{NewTheoremC} (see Proposition~\ref{PropositionE}) is that given a very general hyperelliptic curve $X$ and any hyperelliptic curve $C$ whose Jacobian dominates $J(X)$, we have
$$\Hom(J(C),J(X))\cong \mathbb{Z}.$$
This rigidity statement complements recent results of Naranjo and Pirola concerning dominant morphisms from hyperelliptic Jacobians (see \cite[Theorems~1.4 and~1.6]{NaranjoPirola}).

We work throughout over the complex numbers.

\subsection*{Acknowledgments}
We are grateful to Jordan Ellenberg and David Rhyd for raising the
questions addressed here. We also thank Mark Green and Radu Laza for
showing us proofs of the lemma in Section~\ref{sec3} before we were
able to find a reference in the literature.

It is an honor to dedicate this paper to Claire Voisin on the occasion
of her sixtieth birthday. Her influence on both the field as a whole
and the work of the two authors has been immense.

\section{Preliminaries and proof of Proposition~\ref{NewPropositionA}}\label{sec1}

We start with some general remarks about the auto-correspondence degree. Given a smooth complex projective variety $X$ of dimension $n$, its auto-correspondence degree $\autocorr(X)$ is defined as in the introduction. Evidently, this is a birational invariant of $X$. 

Note that if $X$ admits a rational covering $X \dra \PP^n$ of degree $\delta$, then
\[ \autocorr({X}) \ \le \ (\delta -1)^2.  \tag{$\ast$} \]
In fact, replacing $X$ with a suitable birational model, we can suppose that $X \to \PP^n$ is an actual morphism. Then 
\[ X \times_{\PP^n} X \, \subseteq \, X \times X \]
contains the diagonal $\Delta_X$ as an irreducible component. The union of the remaining components $Z^\pr \subseteq X \times_{\PP^n} X$ has degree $\delta - 1$ over each of the factors, and ($\ast$) follows. In particular, 
\begin{equation} \label{autocorr.bound.eqn}
\autocorr(X) \, \le \,   ( \irr(X) - 1 )^2,
\end{equation}
where $\irr(X)$ denotes the minimal degree of such a rational covering $X \dra \PP^n$. Our main results assert that in several circumstances equality holds in \eqref{autocorr.bound.eqn} and that the minimal correspondences arise as just described. We will say in this case that $Z$ is residual to the fiber square of a minimal covering of $\PP^n$. 

As in the earlier works \cite{BCD,BDELU,MOA}, the action of a correspondence on cohomology plays a central role. In the situation of  diagram \eqref{Diagram1.Intro}, $Z$ gives rise to endomorphisms
\[
Z_* \, = \, b_* \circ a^*, \ \ Z^* \, = \, a_* \circ b^*    \]
of the Hodge structure $H^n(X)$. We denote by
\[     Z_*^{n,0} \ = \ \Trace_b \circ a^*, \ \   {Z^*}^{n,0} \ = \ \Trace_a \circ b^*\]
the corresponding endomorphisms of the space $H^{n,0}(X)$ of holomorphic $n$-forms on $X$. In the cases of interest, these will act as a multiple of the identity, allowing us to use a variant of the  arguments from the cited papers involving Cayley--Bacharach.

We now turn to the proof of Proposition~\ref{NewPropositionA}. We then suppose that $X$ is a very general curve of genus $g \ge 3$ and that $Z \rightarrow X \times X$ is a correspondence as in \eqref{Diagram1.Intro} that computes the auto-correspondence degree of $X$. The generality hypothesis on $X$ implies first of all that 
\[
\Pic(X \times X) \ = \ a^* \Pic(X) \, \oplus \, b^* \Pic(X) \, \oplus \, \ZZ \cdot \Delta, 
\]
and hence the image of $Z$ in $X \times X$ is defined by a section of
\begin{equation} \label{Defining.Eqn.Z.for.Curves}
\left( B \, \boxtimes \, A \right) (-m\Delta) 
\end{equation}
for some line bundles $A, B$ on $X$ and some $m \in \ZZ$. Note that then
\[ \deg(a) \, = \, \deg(A) - m, \ \ \deg(b) \, = \, \deg(B) - m. \]
Moreover, both maps
\[
Z_*^{1,0}, \ {Z^*}^{1,0} \colon H^{1,0}(X) \lra H^{1,0}(X) 
\]
are multiplication by $-m$. 

We start by proving that
\[ \deg(a),\,  \deg(b) \ \ge \ \gon(X) -1, \]
which will imply that $\autocorr(X) = (\gon(X) -1)^2$.  We may suppose that  $m \ne 0$, for if $m = 0$, then we are in the setting of \cite[Example 1.7]{MOA} and $\deg(a), \deg(b) \ge \gon(X)$.  Now  fix a general point $y \in X$, and suppose that
\[   b^{-1}(y) \ = \ x_1 + \ldots + x_\delta, \]
where $\delta = \deg(b)$. Then for any $\omega \in H^{1,0}(X)$, we have
\[    \omega(x_1) + \ldots +  \omega(x_\delta)\, = \, Z_*^{1,0}(\omega)(y) \, = \, -m \cdot \omega(y). \]
It follows that the $\delta + 1$ points $y, x_1, \ldots , x_\delta$ do not impose independent conditions on $H^{1,0}(X)$, and hence they move in at least a pencil. In other words, $\deg(b) + 1 \ge \gon(X)$ and similarly $\deg(a) + 1 \ge \gon(X)$, as required.

Assuming that $\deg(a) = \deg(b) = \gon(X) - 1$, it remains to show that a minimal correspondence arises from the fiber square of a pencil. For this, we first of all rule out the possibility that $m < 0$. In fact, by intersecting $Z$ with the diagonal, one finds that
$\deg(A) \ + \ \deg(B) \ge -m\cdot (2g - 2)$,
and hence $\deg(a) + \deg(b) \ge -m \cdot(2g)$. But unless $g = 3$,  this is impossible if $m < 0$ since $2 \cdot (\gon(X) -1)  \le g+1$. When $g = 3$, one needs to rule out the existence of line bundles $A, B$ of degree $2$ such that $r\left(A(y)\right) = r\left(B(y)\right) \ge 1$ for every $y \in X$, and this follows from the well-known description of pencils of degree $3$ on a smooth plane quartic. (See also Remark~\ref{Tang.Correspondence.Remark}.)

Returning to the setting of \eqref{Defining.Eqn.Z.for.Curves}, now assume  that $m > 0$. Then
for every $x \in X$, 
\[    a^{-1}(x) \, \in \, \linser{ A(-m\cdot x) }, \ \  b^{-1}(x) \, \in \, \linser{ B(-m\cdot x) }, \]
which implies that 
\begin{equation} \label{Eqn.r(A).r(B)}  r(A),\, r(B) \ \ge \ m. \end{equation}
We will use this to show that $\deg(a)$ and $\deg(b)$ are minimized when $A$ and $B$ move in pencils. 

 In fact, write $d = \deg(A)$. If $A$ is non-special, then $r(A) = d-g$, so $\deg(a) = d -m \ge g$ thanks to \eqref{Eqn.r(A).r(B)}, and we get a map of smaller degree from a gonal pencil.\footnote{The case $g = 3$ requires a special argument here that we leave to the reader.} Therefore assume  that $A$ is special, so that
 \[   A \, \in \, W^m_d(X). \]
 We may suppose that $X$ is Brill--Noether general, in which case
 \[\rho(m,d,g) \ = \ g-(m+1)(g-d+m)\ \geq \ 0.\]
It follows that
\[(m+1)d \ \geq \  mg+m(m+1)\]
and
\[d \ \geq \  \left(\frac{m}{m+1}\right)g+m,\]
so that
\[\deg(a) \, = \, d-m \ \geq \   \left(\frac{m}{m+1}\right)g.\]
This is minimized when $m = 1$ and similarly for $B$. Thus we can assume that $r(A) = r(B) =1$ and that the image of $Z$ lies in the linear series
\[   \linser{  \left( B \, \boxtimes \, A \right) (-\Delta) } \]
on $X \times X$. But this series is empty unless $A =B$, in which case it consists exactly of the residual to the diagonal in the fiber square of the pencil defined by $A$. This completes the proof.

\begin{remark} \label{Tang.Correspondence.Remark}
Suppose that $X \subseteq \PP^2$ is  a smooth plane curve of degree $d>3$; then the correspondence defined  as the closure of 
\[\left\{(x,y)\in X\times X\mid y\neq x\text{ is in the embedded tangent line to } X \text{ at }x\right\}\]
dominates the first factor with degree $d-2$ but fails to arise from the fiber square of projection from a point on $X$. The degree of the second projection is $d(d-1)$, which is much greater than $d-1 = \gon(X)$. 
\end{remark}

\begin{remark} As the referee remarks, the preceding result leads to two interesting questions to which we don't know the answers. First, if $X$ is a very general $k$-gonal curve, is it true that
\[
\autocorr(X) \, = \, (k-1)^2?
\]
We suspect that this should be the case. Second, does Proposition~\ref{NewPropositionA} hold if we replace ``very general'' with ``general''? 
\end{remark}

\section{Proof of Theorem~\ref{NewTheoremB}}\label{sec2}

In this section we prove the following refinements of Theorem~\ref{NewTheoremB} from the introduction. 
\begin{theorem}   \label{NewTheoremD}
Let 
$X \subseteq \PP^{n+1}$ 
be a very general hypersurface of degree  $d \ge 2n + 2$, and consider a self-correspondence $Z$ as in diagram \eqref{Diagram1.Intro}:
\[
\xymatrix{
& Z \ar[dl]_a\ar[dr]^b  \\
X & & X.
 } \]
 Assume that $Z$ does not map to the diagonal. Then
\[ \deg(a) \, \ge \, d-2, \ \ \deg(b) \, \ge \, d-2, \]
and hence $\autocorr(X) = (d-2)^2$. 
\end{theorem}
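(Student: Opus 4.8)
The plan is to mimic the curve case, replacing the role of holomorphic $1$-forms and the gonality bound with the Cayley--Bacharach property for the adjoint series $H^0\big(X, \omega_X\big) = H^0\big(X, \cO_X(d-n-2)\big)$ and the known lower bound $\irr(X) = d-1$ for very general hypersurfaces of large degree. First I would analyze the action of $Z$ on top-degree cohomology: since $X$ is a very general hypersurface with $d \ge 2n+2$, one expects $H^{n,0}(X)$ to be "as simple as possible" as a Hodge-theoretic module, so that the endomorphism $Z_*^{n,0} = \Trace_b \circ a^*$ of $H^{n,0}(X) = H^0\big(X, \cO_X(d-n-2)\big)$ acts as multiplication by a single integer $-m$ (and likewise ${Z^*}^{n,0}$ acts by $-m$). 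The input here is the very-general hypothesis, which forces the image of $Z$ in $X \times X$ to be cut out by a class with a well-understood description in $\Pic(X \times X)$ (or at least in the relevant piece of the Hodge structure / intermediate Jacobian), with $m$ the "diagonal coefficient"; this is the analogue of \eqref{Defining.Eqn.Z.for.Curves}.

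Next, with $m$ in hand, fix a general point $y \in X$ and write $b^{-1}(y) = x_1 + \cdots + x_\delta$ with $\delta = \deg(b)$. For every adjoint form $\omega \in H^0\big(X, \cO_X(d-n-2)\big) = H^{n,0}(X)$ one gets
\[
\omega(x_1) + \cdots + \omega(x_\delta) \ = \ Z_*^{n,0}(\omega)(y) \ = \ -m \cdot \omega(y),
\]
so the $0$-cycle $x_1 + \cdots + x_\delta + y$ (with multiplicity $m$ on $y$) fails to impose independent conditions on $|\cO_X(d-n-2)| = |\omega_X|$. The goal is then to turn this failure of independence into a lower bound $\delta \ge d-2$. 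When $m \ne 0$ this should follow from a Cayley--Bacharach-type statement for hypersurfaces — exactly the kind of argument used in \cite{BCD,BDELU,MOA} to bound $\irr(X)$ from below — possibly needing the auxiliary lemma from Section~\ref{sec3} referenced in the acknowledgments. When $m = 0$ one is in the pure "correspondence of zero trace" situation and I would expect $\deg(a), \deg(b) \ge d-1$ directly, again by the covering-gonality/Cayley--Bacharach bound, which is even stronger than needed. Symmetrically one obtains $\deg(a) \ge d-2$, and multiplying gives $\autocorr(X) \ge (d-2)^2$; combined with \eqref{autocorr.bound.eqn} and $\irr(X) = d-1$ this yields equality.

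The main obstacle, I expect, is the step producing $m$: showing that $Z_*^{n,0}$ and ${Z^*}^{n,0}$ are genuinely scalar, which is where the precise very-general hypothesis and a Noether--Lefschetz-type or monodromy argument on the family of hypersurfaces of degree $d$ must be invoked, together with controlling how the image of $Z$ sits in $X \times X$ (there is no clean $\Pic(X \times X)$ decomposition as for curves, so one works with Hodge classes). Once scalarity is known, the Cayley--Bacharach estimate is the second potentially delicate point — one must rule out small $\delta$ with the extra point $y$ carrying multiplicity $m$, which is slightly more than the bare $\irr(X)$ bound gives — but this should be a routine adaptation of the method of the cited papers, handled via the lemma of Section~\ref{sec3}.
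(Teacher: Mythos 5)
Your first step --- establishing that $Z_*^{n,0}$ and ${Z^*}^{n,0}$ act as multiplication by a single integer via the very-general hypothesis and a monodromy argument --- is exactly what the paper does (Lemma~\ref{lem6}: $\End(H^n_{\pro}(X,\ZZ))=\ZZ\cdot\Id$, so $[Z]$ lies in the span of $\Delta$ and the classes $h_1^ih_2^{n-i}$, and only $\Delta$ acts nontrivially on $H^{n,0}$). The gap is in the second step. The Cayley--Bacharach bound of \cite[Theorem 2.4]{BCD}, applied to $a^{-1}(x)$ (or to $a^{-1}(x)\cup\{x\}$ when the scalar is nonzero), only yields $\deg(a)\ge d-n$ (resp.\ $d-n-1$). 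For $n\ge 2$ this falls short of the claimed $d-2$ by $n-1$, so the ``routine adaptation'' you invoke does not exist: no purely numerical Cayley--Bacharach estimate on the fiber closes this gap, and your expectation of $\deg(a)\ge d-1$ in the traceless case is likewise only $d-n$ in general.

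The paper bridges this gap with a genuinely geometric argument that your proposal is missing. Assuming $\delta_b\le d-1$, the structural refinement \cite[Theorem 2.5]{BCD} shows that each fiber $b^{-1}(y)$ spans a \emph{line} $\ell_y\subseteq\PP^{n+1}$ (containing $y$ itself when $c\neq 0$). One then packages the assignment $y\mapsto\ell_y$ into a rational map $\psi\colon X\dashrightarrow\GG(1,n+1)$ and forms the incidence cycle $A$ on $X\times X$ supported on $\{(x,y)\mid x\in\ell_y\}$, which decomposes as $A=\ol{Z}+m\Delta+R$. Since $H^\bullet(\GG(1,n+1))$ is Hodge--Tate, $[A]^*$ kills $H^{n,0}(X)$, forcing $m=-c$ and $c\le 0$; and the residual $R$ cannot dominate the first factor without violating the degree bounds. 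The decisive input --- absent from your sketch --- is then the claim that every component of $R$ is of the form $x_0\times X$, proved as in \cite[Theorem A]{MOA} using the Ein--Voisin positivity results on subvarieties of very general hypersurfaces. Only after $R$ is pinned down to a single reduced fiber $x_0\times X$ does the count of the $d$ points of $X\cdot\ell_y$ (each $x_j$ with multiplicity $1$, plus $y$ when $c\neq 0$, plus $x_0$) produce $\delta_b\ge d-2$. Without that identification of $R$, the residual components could absorb up to $n$ of the intersection points and the bound would stall at $d-n-1$.
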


\begin{theorem} \label{NewTheoremD'}
In the situation of Theorem~\ref{NewTheoremD}, assume in addition that $\deg(a)\leq 2d-2n-3$.
\begin{enumerate}
\item\label{D'i} If\, $\deg(b) \le d-2$, then $\deg(a) = d-2$ and $Z$ is birationally residual to the fiber square of projection from a point $x_0 \in X$. 
\item\label{D'ii} If\, $\deg(b) = d-1$, then either
\begin{enumerate}
\item\label{D'iia} $Z$ is birational to the fiber product of two rational mappings $\phi_1 , \phi_2 \colon X \dra \PP^{n}$, or 
\item\label{D'iib}  there exist an $n$-fold $Y\!$ and a dominant rational mapping $\phi\colon X \dashrightarrow Y$ of degree $d$ such that $Z$ is birationally residual to the diagonal in the fiber product $X \times{_Y} X$.
 \end{enumerate}
\end{enumerate}
 \end{theorem}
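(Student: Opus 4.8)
\smallskip
\noindent\textit{Sketch of the argument.} Write $\delta_a=\deg(a)$ and $\delta_b=\deg(b)$, so that $\delta_a,\delta_b\ge d-2$ by Theorem~\ref{NewTheoremD}, and recall from the proof of that theorem that $Z$ acts on $H^{n,0}(X)=H^0\big(X,\OO_X(d-n-2)\big)$ through $Z_*^{n,0}$ and ${Z^*}^{n,0}$ as multiplication by integers. The first step, exactly as in the proof of Proposition~\ref{NewPropositionA}, is to convert these trace identities into Cayley--Bacharach statements on $X$: for a general point $x\in X$ the subschemes
\[
\Gamma_b(x)\ =\ \{x\}\cup a\big(b^{-1}(x)\big),\qquad \Gamma_a(x)\ =\ \{x\}\cup b\big(a^{-1}(x)\big)
\]
satisfy the Cayley--Bacharach condition with respect to $|\OO_X(d-n-2)|$ (if the relevant scalar vanishes the same holds for $a(b^{-1}(x))$, resp.\ $b(a^{-1}(x))$, alone, which only helps). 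Since $Z\to X\times X$ is birational onto its image and $Z$ is not the diagonal, for general $x$ the map $a$ is injective on $b^{-1}(x)$ and avoids $x$, so $\deg\Gamma_b(x)=\delta_b+1$ and $\deg\Gamma_a(x)=\delta_a+1$. The hypothesis $\delta_a\le 2d-2n-3$ says precisely that $\deg\Gamma_a(x)\le 2(d-n-1)$, the range in which the lemma of Section~\ref{sec3} classifies Cayley--Bacharach subschemes of $X$ as lying on a line, or---in the largest degrees---on a plane conic; it also enters directly to rule out ``tangential'' correspondences, whose relevant projection has degree of order $d^{2}$.

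\smallskip
For part~(\ref{D'i}), $\delta_b\le d-2$ gives $\deg\Gamma_b(x)\le d-1$, so the lemma puts $\Gamma_b(x)$ on a line $\ell_x$. A very general hypersurface of degree $d\ge 2n+2$ contains no line, so $\ell_x\cap X$ is $d$ distinct points; since $\deg\Gamma_b(x)\le d-1$ while $\delta_b\ge d-2$, we get $\delta_b=d-2$ and $\Gamma_b(x)=(\ell_x\cap X)\setminus\{\sigma(x)\}$ for some rational map $\sigma\colon X\dra X$. One then shows $\sigma$ is constant: the closure $W$ of $\{(u,x):u\in\ell_x\cap X\}$ in $X\times X$ decomposes as a cycle into $\Delta$, $Z$, and the graph of $\sigma$; were $\sigma$ non-constant, this last piece would yield a non-diagonal self-correspondence with a projection of degree $1<d-2$, contradicting Theorem~\ref{NewTheoremD}, while $\sigma=\mathrm{id}$ would make $\ell_x$ tangent to $X$ at $x$, hence $a^{-1}(u)$ positive-dimensional for $n\ge2$ and the tangential correspondence of Remark~\ref{Tang.Correspondence.Remark} (with $\delta_a=d(d-1)>2d-2n-3$) for $n=1$. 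Thus $\sigma\equiv x_0$ for a fixed $x_0\in X$, so $\ell_x=\overline{x_0x}$ and $a\big(b^{-1}(x)\big)=(\overline{x_0x}\cap X)\setminus\{x_0,x\}$; this is exactly the fibre over $x$ of the residual $Z_0$ to the diagonal in the fibre square of $\pro_{x_0}\colon X\dra\PP^n$, so $Z$ is birational to $Z_0$ and in particular $\delta_a=d-2$.

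\smallskip
For part~(\ref{D'ii}), $\delta_b=d-1$ gives $\deg\Gamma_b(x)=d$, and the lemma puts $\Gamma_b(x)$ either on a line $\ell_x$ or on a genuine plane conic containing a line $\ell_x$ that carries all but one of its points. In the first case $\Gamma_b(x)=\ell_x\cap X$, so the $d$-point fibres of $b$ (pushed to $X$) are collinear; a rigidity argument shows they are the fibres of a degree-$d$ rational map $\phi\colon X\dra Y$ (with $Y=\PP^n$, via $\pro_p$, when the lines $\ell_x$ are concurrent at a point $p\notin X$), and $Z$ is birationally the residual to the diagonal in $X\times_Y X$---case~(\ref{D'iib}). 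The alternative that the lines concur at a point of $X$ is impossible, as it would force that point into every $a(b^{-1}(x))$ and hence make $a^{-1}(\text{it})$ dominant over $X$ via $b$, absurd since $\dim Z=n$. In the conic case, the rigidity argument places $\ell_x$ through a fixed point $x_0\in X$, so $a\big(b^{-1}(x)\big)=(\ell_x\cap X)\setminus\{x_0\}$; with $\phi_1=\pro_{x_0}$ (of degree $d-1=\irr(X)$, cf.\ \cite{BDELU}) and $\phi_2\colon x\mapsto\ell_x\in\PP^{n}$, one identifies $Z$ birationally with the fibre product of $\phi_1$ and $\phi_2$---case~(\ref{D'iia}).

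\smallskip
The crux, and the step in which the very-generality of $X$ is essential, is the rigidity argument common to both parts: passing from the pointwise fact that $\Gamma_b(x)$ (or $\Gamma_a(x)$) lies on a line, or a conic, to the global conclusion that the family of these lines is ``rigid''---either concurrent or the fibre family of a single rational map, equivalently that the auxiliary map $\sigma$ is constant and that the conic structure does not spread into a genuine two-component family over $X$. I expect this to require a monodromy argument in the spirit of the minimality proofs of \cite{BDELU}, together with careful control of the scheme structures of $\Gamma_a(x)$ and $\Gamma_b(x)$ over the loci where the fibres of $a$ and $b$ degenerate; matching $\deg(a)$ with the degrees of $\phi_2$ in~(\ref{D'iia}) and of $\phi$ in~(\ref{D'iib}) is then routine.
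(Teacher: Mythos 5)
Your overall strategy---use the trace identity to get a Cayley--Bacharach condition on the fibers, conclude that they are collinear, and then analyze the residual intersection of the lines with $X$---is the same as the paper's. But the step you yourself flag as the crux, the ``rigidity'' of the family of missing points, is exactly the step that carries the weight of the proof, and your proposed substitute for it does not work. In part~(\ref{D'i}) you argue that if $\sigma$ were non-constant, its graph would be a non-diagonal self-correspondence with a degree-one projection, contradicting Theorem~\ref{NewTheoremD}. That theorem, however, only applies to correspondences dominating \emph{both} factors. For $n\ge 2$ nothing a priori prevents $\sigma$ from having image a positive-dimensional proper subvariety $S\subsetneq X$ (equivalently, a component of the residual cycle dominating the second factor but with first-factor image $S$ with $0<\dim S<n$), and this case is untouched by your argument. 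The paper isolates precisely this issue as the Claim in Section~\ref{sec2} (every component of the residual $R$ is of the form $x_0\times X$) and proves it not by monodromy, as you anticipate, but by the positivity/separation-of-points argument of \cite[Theorem A]{MOA}, resting on the computations of Ein \cite{Ein} and Voisin \cite{Voisin} for subvarieties of very general hypersurfaces. Without this input the theorem is not proved.

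A second, related problem is your dismissal of the scalar $c$ by which $Z_*^{n,0}$ acts (``if the relevant scalar vanishes\dots which only helps''). Whether $c=0$ or $c\neq 0$ determines whether the point $x$ lies on the line $\ell_x$, hence the degree of the residual multisection (so whether your $\sigma$ is even single-valued), and it is exactly what separates cases \eqref{D'iia} and \eqref{D'iib}. The paper pins this down globally: it forms the cycle $A=\ol{Z}+m\Delta+R$ supported on $\{(x,y)\mid x\in\ell_y\}$, uses that $A$ factors through the Hodge--Tate cohomology of $\mathbf{G}(1,n+1)$ to get $[A]^*=0$ on $H^{n,0}(X)$, and deduces $m=-c$ and then $c\in\{0,-1\}$ together with the multiplicity statements. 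Your alternative ``line or plane conic'' dichotomy for $\Gamma_b(x)$ has no counterpart in the paper (the relevant input is \cite[Theorem 2.5]{BCD}, not a lemma of Section~\ref{sec3}, which concerns hyperelliptic curves) and is not justified as stated. I would rework the argument around the cycle decomposition and the Claim rather than around $\sigma$.
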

 
 \begin{remark}
   The various possibilities in Theorem~\ref{NewTheoremD'} actually occur. For example, in \eqref{D'iia} one considers projection from two different points in $X$, while \eqref{D'iib}  arises for  a general projection $X \to \PP^n$ from a point off $X$.
 \end{remark}

Turning to the proofs, the arguments follow the  line of attack of \cite{BCD,BDELU,MOA}, so we will  be relatively brief. Fix $X$ and $Z$ as above, and write
\[\delta_a\ =_{\defi}\ \deg (a)\quad\text{and}\quad \delta_b\ =_{\text{def}}\ \deg (b).\]
The first point to observe is that  we may---and do---assume that the endomorphism ring of the Hodge structure $H^n_{\pro}(X,\ZZ)$ is $\ZZ$.

\begin{lemma}\label{lem6}
 If\, $X$ is a very general hypersurface in $\PP^{n+1}$, then 
 $$\End(H^n_{\pro}(X,\ZZ))=\mathbb{Z}\cdot \Id.$$
 Equivalently, $\Hdg^{n,n}(X\times X)$  is generated by the classes of the diagonal and the products $ h_1^ih_2^{n-i}$ $(1 \le i \le n)$, where $h_j=\pro_{j}^* c_1(\mathcal{O}(1)|_X)$.
 \end{lemma}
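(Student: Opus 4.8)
The plan is to deduce the statement from the infinitesimal Torelli / Noether–Lefschetz theory for hypersurfaces, specifically from the fact that the monodromy group acting on the primitive cohomology $H^n_{\pro}(X,\QQ)$ of a very general hypersurface is a large (Zariski-dense in the full orthogonal or symplectic group) group, so that the only endomorphisms commuting with the whole monodromy representation are the scalars. First I would set up the universal family $\XXX \to S$ of smooth degree-$d$ hypersurfaces in $\PP^{n+1}$ over the parameter space $S$ of smooth members, and recall that $H^n_{\pro}$ forms a local system on $S$ whose monodromy image is, by a theorem going back to Ebeling, Beauville, and (for the general setup) Deligne's computation of the monodromy of Lefschetz pencils, Zariski-dense in $\tn{O}(H^n_{\pro})$ when $n$ is even and in $\tn{Sp}(H^n_{\pro})$ when $n$ is odd. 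An endomorphism $\varphi$ of the rational Hodge structure $H^n_{\pro}(X,\QQ)$ of a very general $X$ extends (again by very-generality, after passing to a finite étale cover of a neighborhood in $S$) to an endomorphism of the local system compatible with the polarization, hence commutes with the monodromy; by Zariski-density and Schur's lemma it must be a scalar, and since it preserves the integral lattice the scalar is an integer. This gives $\End(H^n_{\pro}(X,\ZZ)) = \ZZ\cdot\Id$.

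Next I would translate this into the asserted description of $\Hdg^{n,n}(X\times X)$. By the Künneth decomposition and the self-duality of $H^n(X)$ via the polarization, one has a canonical identification
\[
\Hom_{\tn{HS}}\big(H^n(X),H^n(X)\big)\ \cong\ \big(H^n(X)\otimes H^n(X)\big)^{(n,n)}\ \subseteq\ H^{2n}(X\times X)
\]
with Hodge classes on the left corresponding to Hodge classes in the middle Künneth component $H^n(X)\otimes H^n(X)$ of $H^{2n}(X\times X)$. Splitting $H^n(X) = H^n_{\pro}(X)\oplus \CC\cdot h^{n/2}$ (the second summand present only when $n$ is even), the endomorphisms of $H^n(X)$ as a Hodge structure decompose into: the endomorphisms of $H^n_{\pro}(X)$, which by the first part are just $\QQ\cdot\Id$; the maps between $H^n_{\pro}(X)$ and the $(n/2,n/2)$-line, which vanish for weight reasons unless $n\equiv 0$; and the endomorphism of that line. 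The identity of $H^n_{\pro}(X)$ corresponds to the primitive part of the diagonal class, and the remaining pieces are spanned by the classes $h_1^i h_2^{n-i}$. Finally, the other Künneth components $H^{2n-k}(X)\otimes H^k(X)$ with $k\ne n$ contribute no extra Hodge classes beyond these monomials, because $H^{<n}(X)$ and $H^{>n}(X)$ are generated by powers of the hyperplane class by the Lefschetz hyperplane theorem and hard Lefschetz. Assembling these pieces yields that $\Hdg^{n,n}(X\times X)$ is generated by $[\Delta]$ together with the $h_1^i h_2^{n-i}$, $1\le i\le n$.

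The main obstacle is the monodromy/very-general input in the first paragraph: one must be careful that ``the endomorphism algebra of the Hodge structure of a very general member is $\ZZ$'' genuinely follows from Zariski-density of monodromy, which requires the standard but nontrivial argument that any Hodge endomorphism over a very general point propagates to a flat, monodromy-equivariant endomorphism of the variation (using that the Noether–Lefschetz-type locus where a given extra Hodge class appears is a countable union of proper analytic subvarieties of $S$). Once that is in hand, the rest is a formal Künneth-plus-Lefschetz bookkeeping exercise, with the only case distinction being the parity of $n$ (even $n$ contributing the extra diagonal hyperplane monomials and the primitive/non-primitive splitting of the diagonal). I would expect to cite the monodromy computation rather than reproduce it, and to remark that this is exactly the type of statement Green, Laza, and others supplied proofs of, as acknowledged above.
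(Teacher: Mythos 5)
Your proposal follows essentially the same route as the paper: both deduce $\End(H^n_{\pro}(X,\ZZ))=\ZZ\cdot\Id$ from the fact that the algebraic monodromy group of the universal family is the full orthogonal ($n$ even) or symplectic ($n$ odd) group, so that a Hodge endomorphism, which must commute with it, is a scalar; and both treat the $\Hdg^{n,n}(X\times X)$ statement as the K\"unneth/Lefschetz reformulation of this. The only (immaterial) difference is that the paper explicitly disposes of the degenerate cases $d=1,2$, where the primitive cohomology has rank $0$ or $1$ and the density statement does not apply.
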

\begin{proof}The lemma follows from the computation of the algebraic monodromy group for the corresponding variations of Hodge structures (see \cite{Beau} or \cite[Section~10.3]{PS}). In the cases $d=1,2$, the primitive cohomology has rank $0$ and $1$, respectively, so the statement holds trivially. For larger $d$, an element of $\GL(H^n_{\pro}(X,\ZZ))$ is a morphism of Hodge structures if and only if it commutes with the orthogonal group ($n$ even) or the symplectic group ($n$ odd). The centralizer of both of these subgroups is $\mathbb{Z}\cdot \Id$. Alternative arguments were shown to us by Radu Laza and Mark Green.
\end{proof}
It follows from Lemma~\ref{lem6} that 
\[ Z_*^{n,0} \colon H^{n,0}(X) \lra H^{n,0}(X)\]
is multiplication by some integer $c$. Note that then ${Z^*}^{n,0}\colon H^{n,0}(X) \to H^{n,0}(X)$ is multiplication by the same integer $c$. In fact, abusively writing $[Z]$ for the class of the image of $Z$ in $H^*(X \times X)$, one has
$$
[Z]\, \in \, \Hdg^{n,n}(X\times X)\ =\ \left\langle \Delta\, , \,  h_1^ih_2^{n-i}\, | \, 1\leq i\leq n\right\rangle_{\mathbb{Q}},
$$
and of these  classes, only $\Delta$ gives rise to a non-zero map
\[H^{n,0}(X)\longrightarrow H^{n,0}(X)\]
under the identification
\[\Hdg^{n,n}(X\times X)\, \cong \, \End_{\QQ-\HS}(H^n(X)).\]
Moreover, 
\[\Delta_*=\Delta^*=\Id_{H^{n,0}(X)}.\]
As in the previous section, we will need to distinguish between the cases $c=0$ and $c\neq 0$.\\

We start by showing that
\[   \delta_a, \,   \delta_b  \ \ge \ d-2,\]
by an argument parallel to that appearing in Section~\ref{sec1}. 
First, observe that
\begin{equation} \delta_a
\, ,\,\delta_b  \ \ge \ \begin{cases} d-n & \text{ if } c=0,\\
d-n-1 &\text{ if } c\neq 0. \end{cases}\label{lowerbounddelta}\end{equation}
Indeed, if $c=0$, then $Z$ is a traceless correspondence, so given general $x,y\in X$,  the sets $a^{-1}(x)$ and $b^{-1}(y)$ both satisfy the Cayley--Bacharach condition with respect to $H^{n,0}(X)$. Similarly, when $c \ne 0$,  the cycle $Z-c\Delta$ is a traceless correspondence, and hence for general $x,y\in X$, the sets $a^{-1}(x)\cup \{x\}$ and $b^{-1}(y)\cup \{x\}$ also both satisfy the Cayley--Bacharach condition. Inequality \eqref{lowerbounddelta} then follows from \cite[Theorem 2.4]{BCD}.

We next assume  that $\delta_b \le d-1$, aiming for a contradiction when $\delta_b \le d-3$. Fix a general point $y \in X$. The fiber of $Z$ over $y$  sits naturally as a subset of $X$ and hence also $\PP^{n+1}$:
\[ Z_y \, =\, \{x_1,\cdots, x_{\delta_b}\}\, =_{\defi} \, b^{-1}(y)\, \subseteq \, X\, \subseteq \, \PP^{n+1}.\]
Note that if $y$ is general, then the points $x_j$ are distinct. Since $\delta_b+1 \le 2d - 2n + 1$, it follows from \cite[Theorem 2.5]{BCD} and the vanishing of $(Z-c\cdot\Delta)_*$ that
 the finite set $Z_y$ spans a line $\, \ell_y \, \subseteq \,\PP^{n+1}$. In  a similar fashion, the generic fiber $a^{-1}(x)$ spans a line $_x \ell $. Furthermore, if $c\neq 0$, the point $y$ lies on $\ell_y$ and  $x$ lies on $_x \ell$.

Write
 \[X\cdot \ell_y\ =\ \sum_{i=1}^r a_i z_i \, ;\]
we denote by $m(z)$ the multiplicity of $z$ in  $X\cdot \ell_y$, and we note that  the $x_j$ appear among these points. Observe that $m(x_j)$ does not depend on $j$. Indeed, if $m(x_j)$ were to vary, picking out the $x_j$ with the highest multiplicity for each $y\in X$ would define a non-trivial multisection of the generically finite map 
\[b\colon Z\longrightarrow X,\]
thereby violating the irreducibility of $Z$. Moreover, $m(x_j)=1$ for every $j$. Indeed, if $c=0$, then $\delta_b\geq d-n$ and thus $2\delta_b>d$. Since $\sum m(x_j)\leq d$, we see that $m(x_j)=1$. If $c\neq 0$, then $\delta_b\geq d-n-1$ and $2\delta_b+1>d$. Since $y$ is in the support of $X\cdot  \ell_y$, we get 
\[1+\sum m(x_j)\leq d,\]
which shows $m(x_j)=1$ for all $i$. 

Let $\psi\colon X\dashrightarrow  \mathbf{G}(1,n+1)$ be the rational map that associates to a generic $y\in X$ the line $\ell_y$, and denote by $\Gamma_\psi\subset X\times \mathbf{G}(1,n+1)$ the graph of $\psi$. Consider the incidence correspondence
$$I=\{(x,\ell)\colon x\in \ell\}\, \subseteq \, X\times \mathbf{G}(1,n+1).$$
We can assume that $X$ does not contain any lines, and hence the projection $I\rightarrow \mathbf{G}(1,n+1)$ is  finite. Consider the cycle
\[A \ =_{\defi} \ \pro_{X\times X, *} \left( \pro_{\mathbf{G}(1,n+1)\times X}^*\Gamma_\psi^t \cdot \pro_{X\times \mathbf{G}(1,n+1)}^*I\, \right)\] on $X \times X$. 
So the support of $A$ is the set $\{ (x, y) \mid x \in \ell_y\}$.  The   image  $\ol{Z}$ of   $Z$ in $X \times X$ and possibly the diagonal $\Delta$ are among the irreducible components of this cycle, and denoting by $R$ the remaining components, we have
\[  A \ = \ \ol{Z} \, + \, m \Delta \, + \,  R. \]
By construction, $R$ dominates the second factor, and we assert that it cannot dominate the first. Indeed, were $R$ to dominate both factors, it would define a correspondence violating the degree bounds in~\eqref{lowerbounddelta}.

Next, observe that $A$ acts as the composition
$$
[A]^* \, =\, [I]^*\circ\psi_*\colon H^{n,0}(X)\longrightarrow H^{\bullet}(\mathbf{G}(1,n+1))\longrightarrow H^{n,0}(X),
$$
and this composition is zero since $H^{\bullet}(\mathbf{G}(1,n+1))$ is Hodge--Tate.   Furthermore,
$$
[R]^*=0\colon H^{n,0}(X)\longrightarrow H^{n,0}(X)
$$
since $R$ does not dominate the first factor. 
Therefore  $m=-c$, and in particular  $c\leq 0$. 
If $c\neq 0$, we contend that $c=-1$. Indeed, given a general point $(x,y)\in Z$,  the lines $_x\ell$ and $\ell_y$  pass through $x$ and $y$ and thus
$$_x\ell=\ell_y.$$
Consequently, $_x\ell\cdot X=\ell_y\cdot X$ and by the statements above, we see that $x$ and $y$ both appear with multiplicity $1$ in this intersection. Accordingly, the diagonal must appear with multiplicity $1$ in $ A$, and thus $c=-1$.

To finish the proof, we need the following.

\begin{claim}\label{claim}
Every irreducible component of $($the support of\;$)$ $R$ is of the form $x_0\times X$ for some $x_0\in X$.
\end{claim}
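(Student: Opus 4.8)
\textbf{Plan of proof of Claim~\ref{claim}.}
The idea is to analyze, for a general point $(x,y)$ on an irreducible component $R_0$ of $R$ that dominates the second factor, the geometry of the line $\ell_y$ and the position of $x$ on it. By construction $x\in\ell_y$, and $R_0$ does \emph{not} dominate the first factor, so the first projection $R_0\to X$ has image a proper subvariety $W\subsetneq X$. The plan is to show $W$ is a point.

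First I would pin down the locus swept out by the lines $\ell_y$. Recall that for general $y$ the intersection $X\cdot\ell_y$ is reduced, equals $x_1+\dots+x_{\delta_b}$ (each a reduced point, by the multiplicity computation already carried out) plus the point $y$ itself when $c\neq 0$ (which we have now shown forces $c=-1$), for a total of $\delta_b+1\le d-1$ points counted with multiplicity; hence $X\cdot\ell_y$ is supported on at most $d-1<d$ distinct points, so $\ell_y\not\subseteq X$ but there must be at least one further point of intersection, i.e.\ $\#(\ell_y\cap X)\le d$ with the ``missing'' $X\cdot\ell_y$ accounted for either by an extra point of multiplicity $\ge 1$ that is \emph{not} of the form $x_j$ or $y$, or by some $x_j$ acquiring higher multiplicity — but the latter was ruled out, so generically there is a residual point, and the residual components of $A$ besides $\overline Z$ and $\Delta$ come precisely from the choices of which point of $\ell_y\cap X$ one selects as the ``$x$''-coordinate. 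So a component $R_0$ of $R$ corresponds to a rational section $y\mapsto p(y)\in\ell_y\cap X$ of the incidence variety distinct from the $x_j$'s and from $y$; the task is to show such a section, if it exists and has image contained in a proper subvariety, in fact has constant image.

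The key step — and the one I expect to be the main obstacle — is a Cayley--Bacharach / trace argument applied to $R_0$ directly. Since $[R_0]^*=0$ on $H^{n,0}(X)$ (as $R_0$ does not dominate the first factor, its pullback on holomorphic $n$-forms vanishes), the correspondence $R_0$ is traceless, so for general $y$ the fiber $R_0^{-1}(y)=\{p(y)\}$ together with whatever is needed to restore the Cayley--Bacharach condition relative to $|\mathcal{O}_{\PP^{n+1}}(d-n-2)|=H^{n,0}(X)$ must satisfy that condition. But a single point (or the degree-$1$ map $b|_{R_0}$) imposes independent conditions unless that point is forced to be stationary: more precisely, collect $p(y)$ over a one-parameter family of $y$; if $p$ were non-constant its image would be a positive-dimensional subvariety $W\subseteq X$, and one shows that a general line $\ell_y$ meets $W$ in a single moving point while the remaining $\delta_b$ points $x_j$ lie on $X$; feeding $\ell_y\cap X = \{x_1,\dots,x_{\delta_b}\}\cup\{p(y)\}\cup(\text{pt}\ y \text{ if }c=-1)$ into \cite[Theorem 2.5]{BCD} (the span statement) again, and using that the $x_j$ already satisfy Cayley--Bacharach with respect to $H^{n,0}(X)$ while $p(y)$ does not lie in their span unless $p(y)$ is itself one of the $x_j$, one derives that the configuration on $\ell_y$ forces a contradiction with the degree bound $\delta_b+1\le 2d-2n+1$ and the irreducibility of $Z$ — exactly as in the first part of the section. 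The upshot is that no genuinely moving residual section exists, so each component of $R$ has constant first coordinate, i.e.\ is of the form $x_0\times X$.

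Finally I would record that since $R$ dominates the second factor, each such component $x_0\times X$ does so with some multiplicity, and these $x_0$ are exactly the points whose projection-from-$x_0$ fiber square accounts for the ``extra'' intersection point of the general line $\ell_y$ with $X$; this is what will be used in the remainder of the proof of Theorem~\ref{NewTheoremD} to conclude that $\overline Z$ is residual to the fiber square of projection from a single point. The only delicate point to be careful about is the case $c=0$ versus $c=-1$: when $c=0$ the point $y$ does not lie on $\ell_y$, so $X\cdot\ell_y$ has exactly $\delta_b\ge d-n$ of its $\le d$ points equal to the $x_j$ and the residual has degree $\le n$ in the appropriate sense, whereas when $c=-1$ one must remember to exclude $y$ from the bookkeeping; in both cases the numerical inequality $2\delta_b>d$ (resp.\ $2\delta_b+1>d$) is what makes the span/Cayley--Bacharach input applicable, so the argument runs uniformly.
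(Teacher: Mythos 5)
There is a genuine gap at the step you yourself flag as the main obstacle. The vanishing of $[R_0]^*$ on $H^{n,0}(X)$ for a component $R_0$ of $R$ that fails to dominate the first factor is purely formal: the pullback of a holomorphic $n$-form under a non-dominant map is identically zero, so the trace identity underlying the Cayley--Bacharach mechanism reads $0=0$ and imposes no condition whatsoever on the residual points $p(y)$. In particular it is not true that the fiber $\{p(y)\}$ ``together with whatever is needed'' satisfies Cayley--Bacharach with respect to $H^{n,0}(X)=H^0\bigl(\OO_X(d-n-2)\bigr)$ --- a single point can never satisfy that condition for a very ample system --- and no contradiction is forthcoming. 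Indeed, an argument of this shape would prove too much: it would rule out the residual $R$ altogether, whereas $R=x_0\times X$ genuinely occurs (for projection from $x_0\in X$, the extra point $x_0\in\ell_y\cap X$ is present for every $y$). Re-feeding the enlarged configuration into \cite[Theorem~2.5]{BCD} also gives nothing new, since $p(y)$ already lies on the line $\ell_y$ spanned by the $x_j$. The upshot is that your mechanism cannot distinguish a moving $p(y)$ from a constant one, which is exactly the dichotomy the Claim is about; nor does a dimension count help, since the $n$-dimensional family of lines $\{\ell_y\}$ can a priori sit inside the larger family of lines meeting a positive-dimensional $S$.

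What is actually needed --- and what the paper uses, following the proof of \cite[Theorem~A]{MOA} --- is information about the intrinsic geometry of $S=\pro_1(R_0)$. If $0<\dim S<n$, one shows that the correspondence structure carried by $R_0$ forces sections of the canonical bundle of a desingularization of $S$ to fail to birationally separate the requisite number of points, and this contradicts the positivity results of Ein \cite{Ein} and Voisin \cite{Voisin} for positive-dimensional subvarieties of a very general hypersurface of degree $d\ge 2n+2$. Your proposal never invokes these results, and some input of this kind is unavoidable: the traces you consider all live on $X$ and are blind to whether $p(y)$ sweeps out a curve or stays fixed. To repair the argument you would need to replace your second step by the Cayley--Bacharach analysis on $\widetilde S$ itself (with respect to $K_{\widetilde S}$, not $K_X$) and then quote Ein--Voisin to exclude $\dim S>0$.
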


\begin{proof}
The proof proceeds exactly as the proof of \cite[Theorem A]{MOA}. In brief, if the projection of an irreducible component of $R$ to the first factor is $S$, one  shows that sections of the canonical bundle of a desingularization of $S$ do not birationally separate many points. This contradicts computations of Ein \cite{Ein} and Voisin \cite{Voisin} if $\dim S>0$.
\end{proof}

The claim implies that $R$ must be irreducible and reduced since lines meeting $X$ in any fixed zero-dimensional subscheme of $X$ of length 2 do not meet a general point of $X$. It follows that $\delta_b\geq d-2$, and by symmetry that $\delta_a\geq d-2$.

Theorem~\ref{NewTheoremD'} also follows from this claim as follows. 

\begin{proof}[Proof of Theorem~\ref{NewTheoremD'}]
\eqref{D'i}~
If $\delta_b=d-2$, we must have $c=-1$ and $R=x_0\times X$ for some $x_0\in X$. Then we have the equality
\[\ol{Z} \ = \ \textnormal{closure}\left\{(x,y)\mid x\neq x_0, y\neq x, x_0, \text{ and } x\in \overline{x_0y}\right\}.\]
Indeed, every irreducible component of the right-hand side must dominate the second factor, and the degree of the projection of the right-hand side to the second factor is $d-2$.

\eqref{D'ii}~
If $\delta_b=d-1$ and $c=0$, we will show that \eqref{D'iia} is satisfied. There is a point $x_0\in X$ such that $R=x_0\times X$. Consider $(x,y)\in Z$ general, and let 
\[_x\ell\cap X=\{y_j \mid 0\leq j\leq d-1\}\quad\text{and}\quad\ell_y\cap X=\{x_j\mid 0\leq j\leq \delta_a\},\]
where $x=x_1$ and $y=y_1$, so that $(x_1,y),\ldots, (x_{d-1},y)$ and  $(x,y_1),\ldots, (x,y_{\delta_a})$ are in $Z$. Since $(x,y)\in Z$ was chosen generically, $b^{-1}(y_2)$ consists of $d-1$ points, one of which is $x$. Moreover, $b^{-1}(y_2)$ is contained in a line passing through $x_0$, and thus is contained in the line through $x_0$ and $x$. It follows that
$$b^{-1}(y_2)=\{(x_i,y_2)\mid 1\leq i\leq d-1\}.$$
The same reasoning shows that
$$\{(x_i,y_j)\mid 1\leq i\leq d-1, 1\leq j\leq \delta_a\}\subset Z.$$
Let $\varphi_{1}\colon X\dashrightarrow \mathbf{P}^n$ be the projection from $x_0$, and consider the map
\begin{align*}\varphi_{2}\colon X&\longdashrightarrow \mathbf{G}(1,n+1)\\
x\ &\longmapsto \ \ \ \ \  {_x\ell} .\end{align*}
The maps $\varphi_1$ and $\varphi_2$ are generically finite of degree $d-1$ and at least $\delta_a$, respectively. Considering degrees in the following diagram, we see that $\varphi_2$ had degree $\delta_a$ and that $(\varphi_1\times\varphi_2)(\ol{Z})\subset \mathbf{P}^n\times \Im(\varphi_2)$ maps birationally to each factor: 
\[
\begin{tikzcd}
&Z \ar[dl,swap,"a"] \ar[dr,"b"] \ar[dd,dashed]&\\
X \ar[dd,swap,dashed,"\varphi_1"] &  & X \ar[dd,dashed,"\varphi_2"] \\
&(\varphi_1\times\varphi_2)(\ol{Z}) \ar[dl,"\pro_1"] \ar[dr,swap,"\pro_2"]&\\
\mathbf{P}^n&& \Im(\varphi_2)\rlap{.}
\end{tikzcd}
\]
Hence, the subvariety
\[(\varphi_1\times\varphi_2)(Z)\subset \mathbf{P}^n\times \Im(\varphi_2)
\] is the graph of a birational isomorphism $\psi\colon \mathbf{P}^n\dashrightarrow \Im(\varphi_2)$. Accordingly, $\ol{Z}$ is the fiber product of $\varphi_1$ and $\psi^{-1}\circ\varphi_2$.

Finally, if $\delta_b=d-1$ and $c\neq 0$, we show that \eqref{D'iib} is satisfied. We must have $c=-1$ and $\deg(a)=d-1$. Consider the rational map
\begin{align*}\varphi\colon X&\longdashrightarrow \mathbf{G}(1,n+1)\\
y \ &\longmapsto \ \ \ \ \ \ \ell_y.\end{align*}
Denoting by $U$ an open on which $\varphi$ is defined, we contend that 
\[\ol{Z}\, =\, \overline{\{(x,y)\in U^2\colon x\neq y,  \varphi(x)=\varphi(y)\}}\subset X\times X.\]
Given a generic $(x,y)\in Z$, the line $\ell_y$ coincides with the line $_x\ell$ as they both pass through $x$ and $y$. Write
\[_x\ell\cap X=\ell_y\cap X=\{z_j\colon 1\leq j\leq d\},\]
where $z_1=x$ and $z_2=y$. For any $j>1$, the point $(x,z_j)$ is on $Z$, and $b^{-1}(z_j)$ is contained in $\ell_{z_j}={_x\ell}=\ell_y$, so that
\[b^{-1}(z_j)=\ell_y\cap X\setminus \{z_j\}\]
and
\[\{(z_i,z_j)\colon i\neq j\}\subset Z.\]
It follows that $\ell_x= {_x\ell}$ for a generic $x\in X$ and that
\begin{equation*}\pushQED{\qed}
\ol{Z}\, =\, \overline{\{(x,y)\in U^2\colon x\neq y,  \varphi(x)=\varphi(y),\}}\subset X\times X.
\qedhere \popQED
	\end{equation*}
\renewcommand{\qed}{}     
\end{proof}

\section{Proof of Theorem~\ref{NewTheoremC}}\label{sec3}

Theorem~\ref{NewTheoremC} from the introduction follows easily from the following result. 

\begin{proposition}\label{PropositionE}
  Let $X$ be a very general hyperelliptic curve of genus $g\geq 3$, and let $Z\subseteq X\times X$ be a hyperelliptic curve. Then the image of\, $($the normalization of\;$)$ $Z$ under the Abel--Jacobi map is geometrically degenerate;  \textit{i.e.}, it generates a proper subtorus of\, $J(X)\times J(X)$.
\end{proposition}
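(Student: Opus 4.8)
The plan is to study the action of $Z$ on holomorphic $1$-forms and combine it with the Cayley--Bacharach considerations already used in Sections~\ref{sec1} and~\ref{sec2}, now in the hyperelliptic setting. Let $\nu\colon \widetilde Z \to Z \subseteq X\times X$ be the normalization, with $\widetilde Z$ hyperelliptic, and let $a,b\colon \widetilde Z \to X$ be the two projections. As in Section~\ref{sec1}, the very general hypothesis on $X$ forces
\[
\Pic(X\times X) \ = \ a^*\Pic(X)\,\oplus\,b^*\Pic(X)\,\oplus\,\ZZ\cdot\Delta,
\]
so that $Z$ is cut out by a section of $(B\boxtimes A)(-m\Delta)$ for suitable line bundles $A,B$ on $X$ and some $m\in\ZZ$, and the maps $Z_*^{1,0},{Z^*}^{1,0}\colon H^{1,0}(X)\to H^{1,0}(X)$ are both multiplication by $-m$. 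The first step is to dispose of the ``interesting'' cases $\deg a = 0$ or $\deg b = 0$ (giving fibers of a projection), and to observe that if $\widetilde Z$ maps birationally to $X$ via $a$ or $b$, then since $X$ itself is hyperelliptic we are looking at a hyperelliptic curve in $X\times X$ birational to $X$; the claim of geometric degeneracy is automatic there, so the content is in the case where $a$ and $b$ are both genuinely multivalued, i.e. $\deg a,\deg b \ge 2$.

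The heart of the argument is a gonality estimate. Suppose $m\ne 0$; fixing a general $y\in X$ with $b^{-1}(y) = x_1 + \cdots + x_\delta$, $\delta = \deg b$, the trace identity $\sum_i \omega(x_i) = -m\,\omega(y)$ for all $\omega\in H^{1,0}(X)$ shows that the $\delta+1$ points $y,x_1,\dots,x_\delta$ fail to impose independent conditions on canonical forms, hence move in a pencil, so $\deg b \ge \gon(X)-1 = 1$, and likewise $\deg a\ge 1$; this is too weak, so I would instead push the Cayley--Bacharach input harder. Because $X$ is hyperelliptic, $W^1_2(X)$ is a single point $g^1_2$ and any base-point-free pencil of small degree is composed with the hyperelliptic map. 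The plan is to show that the fibers $a^{-1}(x)\cup\{x\}$ (when $m\ne 0$) or $a^{-1}(x)$ (when $m=0$) satisfying the Cayley--Bacharach condition with respect to $|K_X|$, together with the description of $|K_X|$ as pulled back from $\PP^1$ via the $2:1$ map $\pi\colon X\to\PP^1$, force these fibers to be unions of $\pi$-fibers (plus possibly one extra point). Concretely, Cayley--Bacharach with respect to the canonical series on a hyperelliptic curve says the subscheme is ``$\pi^*$-symmetric'': if it contains one point of a $\pi$-fiber it contains the conjugate point, except for a controlled number of exceptions governed by the length. This is exactly the kind of statement Green and Laza supplied (the lemma referenced in Section~\ref{sec3}), so I would invoke it here.

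Granting that, the conclusion is reached as follows. The symmetry forces $a$ and $b$ to factor, up to the hyperelliptic involution and a bounded correction, through $\pi$: there are rational maps fitting $\widetilde Z$ over $\PP^1\times\PP^1$ compatibly with $a$, $b$, $\pi$. Translating into Jacobians, the Abel--Jacobi image $\overline{Z}\subseteq J(X)\times J(X)$ is then swept out using the curve $\PP^1$, i.e. it lies in (a translate of) the image of the sum of two copies of the Prym-type subtorus coming from $\pi$ --- but for a hyperelliptic curve $J(X)$ has no nontrivial such decomposition, so the constraint instead says $\overline Z$ lies in the graph-type locus $\{(\xi,\pm\xi + \text{const})\}$ or in a fiber direction, each of which is a proper subtorus of $J(X)\times J(X)$. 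More precisely, the multiplication-by-$(-m)$ action on $H^{1,0}$ says the induced map on $J(X)$ through $Z$ is multiplication by $-m$, an isogeny onto its image; combined with $\mathrm{End}(J(X))=\ZZ$ (again from the very general hypothesis, cf. the role of Lemma~\ref{lem6} in the hypersurface case) this pins the image of $\widetilde Z\to J(X)\times J(X)$ inside the subtorus $\{(\xi,\eta):\eta = -m\,\xi\}$ up to translation, which is proper. That yields geometric degeneracy, and Theorem~\ref{NewTheoremC} follows by identifying which hyperelliptic curves actually sit in such a subtorus (the three listed ones). The main obstacle I expect is the gonality/Cayley--Bacharach step: upgrading the weak bound $\deg a,\deg b\ge 1$ to genuine structural control of the fibers on a hyperelliptic curve, where the canonical series is degenerate, and making the ``bounded number of exceptional points'' bookkeeping precise enough to rule out stray non-degenerate configurations --- this is where the Green--Laza lemma is essential and where the genus-$2$ case (handled separately via Schoen) shows the estimates are tight.
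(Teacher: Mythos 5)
Your approach diverges fundamentally from the paper's, and the decisive step does not hold. The fatal problem is your final paragraph: knowing that $Z_*^{1,0}$ and ${Z^*}^{1,0}$ act on $H^{1,0}(X)$ as multiplication by $-m$ only controls the composition $b_*\circ a^*\colon J(X)\to J(\widetilde Z)\to J(X)$; it says nothing about the image of $J(\widetilde Z)$ in $J(X)\times J(X)$, and it is that image which determines the subtorus generated by the Abel--Jacobi image of $\widetilde Z$. For instance, a smooth very ample curve $Z\subset X\times X$ also acts on $H^{1,0}(X)$ by an integer (by the same Picard group decomposition you invoke), yet by Lefschetz its Jacobian surjects onto $J(X)\times J(X)$, so its Abel--Jacobi image is geometrically \emph{non}-degenerate. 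Hence the conclusion cannot follow from the trace computation together with $\End(J(X))=\ZZ$. More broadly, your argument never uses the hypothesis that $Z$ is hyperelliptic in a way that would exclude such examples, and the statement is simply false without that hypothesis. A secondary issue: the lemma supplied by Green and Laza is the endomorphism statement $\End(H^n_{\pro}(X,\ZZ))=\ZZ\cdot\Id$ (Lemma~\ref{lem6}), not a Cayley--Bacharach ``$\pi^*$-symmetry of fibers'' statement for the hyperelliptic canonical series; the structural control of fibers you hope to extract from it is not established anywhere and, since the canonical series of a hyperelliptic curve fails to separate conjugate points, the Cayley--Bacharach machinery of Sections~\ref{sec1} and~\ref{sec2} does not transfer.

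The paper's actual proof is of a completely different nature: it is a degeneration argument. One places $Z$ in a family over a locally complete family $\mathcal X/S$ of hyperelliptic curves, specializes to the loci $S_\lambda$ (dense by Colombo--Pirola) where $J(\mathcal X_s)$ is isogenous to $\mathcal A_s^\lambda\times E$ for a fixed elliptic curve $E$, and projects $\mathcal Z_s$ to $E\times E$. If $\mathcal Z_s$ were geometrically non-degenerate, a $\Pic^0$ computation together with the non-isotriviality of $J(\mathcal X_{S_\lambda}/S_\lambda)$ shows that the resulting family of hyperelliptic curves in the fixed abelian surface $E\times E$ genuinely moves, contradicting Pirola's theorem that hyperelliptic curves on abelian varieties are rigid up to translation. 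The hyperellipticity of $Z$ enters precisely and only through Pirola's rigidity theorem; any successful argument must use it at a comparably essential point, and yours does not.
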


Note that we do not assume that $Z$ is smooth; to say it is hyperelliptic means that its normalization is so. 

We note that some genericity condition is necessary in Theorem~\ref{NewTheoremC}. For example, given a hyperelliptic curve $X$, the graph of an automomorphism $X\rightarrow X$ which is neither the identity nor the hyperelliptic involution is a hyperelliptic curve  sitting in $X\times X$. The fact that such graphs map to geometrically degenerate curves in $J(X)\times J(X)$, together with Proposition~\ref{PropositionE}, suggests the following.

\begin{question}
Given an arbitrary hyperelliptic curve $X$ (resp.\ hyperelliptic curves $X$ and $Y$), does every hyperelliptic curve $Z\subseteq X\times X$ (resp.\ $Z\subset X\times Y$) map to a geometrically degenerate curve in $J(X)\times J(X)$ (resp.\ $J(X)\times J(Y)$)?
\end{question}

Let us first show how Theorem~\ref{NewTheoremC} follows from Proposition~\ref{PropositionE}. 

Consider a very general hyperelliptic curve $X$ and a hyperelliptic curve $Z\subset X\times X$ with normalization $Z'$. Abusing notation, we will call the image in $Z$ of Weierstrass points of $Z'$ Weierstrass points of $Z$. Such points map to Weierstrass points of $X$ under each projection. Consider a Weierstrass point $(x_0,y_0)\in Z$ and the embedding
\begin{align*}X\times X \ &\longrightarrow  \ \ \ J(X)\times J(X)\\
(x,y) \ \  \ &\longmapsto  \  ([x]-[x_0],[y]-[x_0]).\end{align*}
By Proposition~\ref{PropositionE}, a translate of the image of $Z$ in $J(X)\times J(X)$ is contained in an abelian subvariety of $J(X)\times J(X)$. Since the image of $Z$ passes through 
\[\tau\ =_{\defi}\ (0,[y_0]-[x_0])\in J(X)[2]\times J(X)[2],\] it is in fact contained in $\tau+A$ for some proper abelian subvariety $A\subset J(X)\times J(X)$. Moreover, since $X$ is very general, the automorphism group of the Jacobian of $X$ is $\ZZ$, and thus there are integers $m,n\in \ZZ$, $m\geq 0$, such that $Z$ is contained in the image of 
\begin{align*} J(X)&\lra J(X)\times J(X)\\
x \ \ & \longmapsto  \  (mx,nx)+\tau.
\end{align*}
Hence,
\[Z\subset \{(x,x')\in X\times X\colon nx+x_0=mx'+y_0\in J(X) \}\subset X\times X.\]
Equivalently, $Z$ is contained in the fiber of the following map over $y_0-x_0$: 
\begin{align*}X\times X&\longrightarrow \ \  J(X)\\
(x,x')&\longmapsto mx-nx'.\end{align*}
Considering the differential of the map above and the fact that the Gauss map of $X$ embedded in its Jacobian has degree $2$, it is easy to see that the only possibility is $n=\pm m$ and $x_0=y_0$.

We have thus shown that $Z$ is contained either in the diagonal of $J(X)$ or in the anti-diagonal of $J(X)$. This completes the proof as the diagonal of $J(X)$ intersects $X\times X$ along the diagonal of $X$ and the anti-diagonal of $J(X)$ intersects $X\times X$ along the graph of the hyperelliptic involution of $X$.

Finally, we give the proof of Proposition~\ref{PropositionE}.

\begin{proof}[Proof of Proposition~\ref{PropositionE}]
Consider $\mathcal{X}/S$, a locally complete family of hyperelliptic curves of genus $g$, and 
\[\mathcal{Z}\subset J(\mathcal{X}/S)\times_SJ(\mathcal{X}/S),\]
a family of hyperelliptic curves such that for very general $s\in S$, the curve $\mathcal{Z}_s$ generates $J(\mathcal{X}_s)\times J(\mathcal{X}_s)$. The idea is to arrive at a contradiction to the observation of Pirola \cite{Pirola} that hyperelliptic curves on abelian varieties are rigid up to translation. 

Specifically, specialize to loci $S_\lambda\subset S$ along which $J(\mathcal{X}_s)$ is isogenous to $\mathcal{A}_s^\lambda\times E$, where $E$ is a fixed elliptic curve and $\mathcal{A}^\lambda\rightarrow S_\lambda$ is a family of abelian $(g-1)$-folds. For each $\lambda$, we have a map
\[p_\lambda\colon \mathcal{Z}_s\longrightarrow E\times E\]
which is the composition of the inclusion of $\mathcal{Z}_s$ in $J(\mathcal{X}_s)\times J(\mathcal{X}_s)$ with the isogeny and the projection to the $E\times E$ factor.

\begin{claim}
The image of $\mathcal{Z}_s$ in $E\times E$ varies with $s\in S_\lambda$. 
\end{claim}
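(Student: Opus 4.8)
The plan is to argue by contradiction: suppose that for some $\lambda$ the image $p_\lambda(\mathcal{Z}_s) \subseteq E \times E$ is constant as $s$ ranges over $S_\lambda$. The key point is that this forces a rigidity on $\mathcal{Z}_s$ itself that is incompatible with the assumption that $\mathcal{Z}_s$ generates $J(\mathcal{X}_s) \times J(\mathcal{X}_s)$ for very general $s$. First I would observe that $\mathcal{Z}_s$ is a hyperelliptic curve sitting in the abelian variety $J(\mathcal{X}_s) \times J(\mathcal{X}_s)$, which up to isogeny is $(\mathcal{A}_s^\lambda \times \mathcal{A}_s^\lambda) \times (E \times E)$, and that $p_\lambda$ is the projection to the last factor composed with the inclusion (and the isogeny). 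Passing to the isogenous model $\widetilde{\mathcal{Z}}_s$, this curve has two ``components'' of its Abel–Jacobi image: one in $\mathcal{A}_s^\lambda \times \mathcal{A}_s^\lambda$ and one in $E \times E$.

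The next step is to apply Pirola's rigidity theorem \cite{Pirola} in the family $\mathcal{A}^\lambda \times \mathcal{A}^\lambda \to S_\lambda$. If the image of $\widetilde{\mathcal{Z}}_s$ in $E \times E$ were constant, then moving $s$ within $S_\lambda$ would vary $\widetilde{\mathcal{Z}}_s$ only through its image in $\mathcal{A}_s^\lambda \times \mathcal{A}_s^\lambda$, while the $E \times E$ part stays fixed. Since a hyperelliptic curve in an abelian variety is rigid up to translation, and translations in the fixed $E \times E$ factor cannot account for the variation (the image there is literally constant, not just constant up to translation — or one absorbs the translation into a section), the curve $\widetilde{\mathcal{Z}}_s$ in its abelian variety would have to be, up to translation, pulled back from (a family over) a proper quotient, forcing the Abel–Jacobi image of $\mathcal{Z}_s$ to be degenerate in $J(\mathcal{X}_s) \times J(\mathcal{X}_s)$ for $s$ very general in $S_\lambda$. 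But $S_\lambda \subseteq S$ and ``very general in $S_\lambda$'' is compatible with ``very general in $S$'' after we note that the generating locus is the complement of countably many proper subvarieties and the $S_\lambda$ sweep out $S$; this contradicts the hypothesis that $\mathcal{Z}_s$ generates $J(\mathcal{X}_s) \times J(\mathcal{X}_s)$ for very general $s \in S$.

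Concretely, I would phrase the contradiction as follows. By Pirola, the hyperelliptic curve $\mathcal{Z}_s \subseteq J(\mathcal{X}_s) \times J(\mathcal{X}_s)$ does not deform with $s$ except by translation; equivalently its class in the relevant Hilbert scheme modulo translation is locally constant over $S_\lambda$. If $p_\lambda(\mathcal{Z}_s)$ did not vary, then the only variation of $\mathcal{Z}_s$ would come from the translation part in the $\mathcal{A}_s^\lambda \times \mathcal{A}_s^\lambda$ direction, which means $\mathcal{Z}_s$ would be the translate of a fixed curve under a section of $\mathcal{A}^\lambda \times \mathcal{A}^\lambda \to S_\lambda$. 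Hodge-theoretically this is impossible for a curve that generates the full product for very general $s$: the normal function / infinitesimal invariant attached to $\mathcal{Z}_s$ in $J(\mathcal{X}_s) \times J(\mathcal{X}_s)$ would then be supported in a proper sub-variation, contradicting non-degeneracy.

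The main obstacle I anticipate is making the rigidity bookkeeping precise: Pirola's theorem gives rigidity up to translation, and one must carefully separate the translation ambiguity (which is harmless, being a constant shift one can normalize away using a Weierstrass point as in the deduction of Theorem~\ref{NewTheoremC}) from genuine deformation of the curve, and then translate ``no genuine deformation in the $E\times E$ direction'' into ``the Abel–Jacobi image is degenerate.'' One also needs that the loci $S_\lambda$ genuinely cover $S$ and that ``very general'' survives restriction to $S_\lambda$; this is where the locally complete hypothesis on $\mathcal{X}/S$ and the abundance of such isogeny loci (varying the elliptic factor $E$ and the splitting) is used. Once these points are in place, the claim follows.
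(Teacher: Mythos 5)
Your proposal has a genuine structural problem: it is circular. In the paper's architecture, the Claim is precisely the statement that gets played off \emph{against} Pirola's rigidity theorem --- one proves by an independent argument that the image in $E\times E$ must move, and only then invokes \cite{Pirola} to conclude that it cannot, which is the contradiction finishing Proposition~\ref{PropositionE}. You instead try to \emph{prove} the Claim by invoking Pirola's rigidity, which both inverts the logic and misapplies the theorem: Pirola's result constrains deformations of a hyperelliptic curve inside a \emph{fixed} abelian variety, whereas $\mathcal{Z}_s$ sits in $J(\mathcal{X}_s)\times J(\mathcal{X}_s)$, an ambient abelian variety that varies non-isotrivially over $S_\lambda$. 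So the assertion ``by Pirola, $\mathcal{Z}_s$ does not deform with $s$ except by translation'' is unjustified, and the subsequent step --- that constancy of the $E\times E$ image would force geometric degeneracy of $\mathcal{Z}_s$ --- is asserted via normal functions and infinitesimal invariants without any actual argument. There is also a quantifier problem you flag but do not resolve: a very general point of $S$ lies in \emph{no} isogeny locus $S_\lambda$, so ``$\mathcal{Z}_s$ generates the product for very general $s\in S_\lambda$'' is not available, and ``the $S_\lambda$ sweep out $S$'' does not repair this.

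The paper's proof of the Claim uses none of this machinery. Assuming $p_\lambda(\mathcal{Z}_s)$ is constant along $S_\lambda$, one studies the Jacobian of the (fixed) image curve. Using the Colombo--Pirola density theorem \cite{CP} for the tangent spaces $T_{\mathcal{A}_s^\lambda,0}\subseteq T_{J(\mathcal{X}_s),0}$ inside the relative Grassmannian, one builds families $\widetilde{\mathcal{Z}}$, $\widetilde{\mathcal{Z}}'$ of normalizations of the curves and of their images over a generically finite cover $\mathcal{G}'$ of that Grassmannian, checks that the composition
\[
\Pic^0\bigl(J(\mathcal{X}_s)\times J(\mathcal{X}_s)\bigr)\longrightarrow \Pic^0\bigl(\widetilde{\mathcal{Z}}_s\bigr)\longrightarrow \Pic^0\bigl(\widetilde{\mathcal{Z}}_s'\bigr)
\]
is nonzero, and uses simplicity of $J(\mathcal{X}_s)$ for generic $s\in\mathcal{G}'$ to conclude that $\Pic^0(\widetilde{\mathcal{Z}}_s')$ contains an abelian subvariety isogenous to $J(\mathcal{X}_s)$ on an open set. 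If the image curve were fixed along $S_\lambda$, a single abelian variety would then contain isogeny copies of $J(\mathcal{X}_s)$ for all $s\in S_\lambda$, contradicting the non-isotriviality of $J(\mathcal{X}_{S_\lambda}/S_\lambda)$. None of these ingredients --- the density statement, the family over the Grassmannian, the nonvanishing of the $\Pic^0$ composition, or the non-isotriviality contradiction --- appears in your proposal, so the gap is not one of bookkeeping but of the central idea.
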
 

But as we noted, this is impossible thanks to \cite{Pirola}, completing the proof. 

The claim is established along the lines of  \cite{Voisin.Abel.Var} and \cite{Martin}. Denoting by $\mathcal{G}/S$ the relative Grassmanian of $(g-1)$-planes in $T_{J(\mathcal{X}_s),0}$, \cite{CP} proves the density of the set 
\[\{T_{\mathcal{A}_s^\lambda,0}\, \subseteq \, T_{J(\mathcal{X}_s),0}\mid  s\in S_\lambda\}\ \subseteq \ \mathcal{G}.\]
(In fact, \cite{CP} shows that the locus 
$\{T_{E,0}\subset T_{J(\mathcal{X}_s),0}  \mid s\in S_\lambda\}$ is dense in the relative Grassmanian of lines in $T_{J(\mathcal{X}_s),0}$. However, one can use the fact that Jacobians are isomorphic to their duals to get the stated assertion.)
By a density argument, one can construct families of smooth curves $\widetilde{\mathcal{Z}}\rightarrow \mathcal{G}'$ and $\widetilde{\mathcal{Z}}'\rightarrow \mathcal{G}'$
 over a generically finite cover $\mathcal{G}'$ of $\mathcal{G}$ and a morphism
 \[p\colon \widetilde{\mathcal{Z}}\longrightarrow \widetilde{\mathcal{Z}}'\]
satisfying the following:
\begin{itemize}
\item Denoting by $\pi$ the map $\mathcal{G}'\rightarrow S$, the curve $\widetilde{\mathcal{Z}}_s$ is the normalization of $\mathcal{Z}_{\pi(s)}$.
\item For $s\in \mathcal{G}'$ such that $\pi(s)\in S_\lambda\subset S$, the maps
\[p_\lambda\colon \mathcal{Z}_s\longrightarrow p_\lambda(\mathcal{Z}_s)\]
and
 \[p\colon \widetilde{\mathcal{Z}}_s\longrightarrow \widetilde{\mathcal{Z}}_s'\]
 agree birationally.
 \end{itemize}
Now consider the composition
 \begin{equation}\label{picmap1}
 \Pic^0(J(\mathcal{X}_s)\times J(\mathcal{X}_s))\longrightarrow \Pic^0(\widetilde{\mathcal{Z}}_s)\xrightarrow{p_*}\Pic^0(\widetilde{\mathcal{Z}}_s'),\end{equation}
 where the first map is the pullback by the composition
 \[\widetilde{\mathcal{Z}}_s\longrightarrow \mathcal{Z}_s\longrightarrow J(\mathcal{X}_s)\times J(\mathcal{X}_s).\]
 One easily checks that the composition \eqref{picmap1} cannot be zero. Since $J(\mathcal{X}_s)$ is simple for generic $s\in \mathcal{G}'$, we deduce that the abelian variety $\Pic^0(\widetilde{\mathcal{Z}}_s')$ contains an abelian subvariety isogenous to $J(\mathcal{X}_s)$ for all $s$ in an open set $U\subset \mathcal{G}'$.
 
Finally, consider $\lambda$ such that $\pi^{-1}(S_\lambda)\cap U\neq\emptyset$. If $p_\lambda(\mathcal{Z}_s)$ does not vary with $s\in S_\lambda$, the fixed abelian variety $\Pic^0(\widetilde{\mathcal{Z}}_s')$ contains an abelian subvariety isogenous to $J(\mathcal{X}_s)$ for all $s\in S_\lambda$. This cannot be since the family $J(\mathcal{X}_{S_\lambda}/S_\lambda)$ is not isotrivial.

\end{proof}


\newcommand{\etalchar}[1]{$^{#1}$}

 \end{document}